\newcommand{\Conv}{\mathop{{\raisebox{-0.2ex}{$\Asterisk$}}}}%
\numberwithin{equation}{section}
\newcommand{\F}{\mathbb{F}}
\newcommand{\R}{\mathbb{R}}
\newcommand{\N}{\mathbb{N}}
\renewcommand{\i}{\mathrm{i}}
\DeclareMathOperator{\sech}{sech}
\newcommand{\ub}{\bar u}
\newcommand{\vb}{\bar v}
\newcommand{\eb}{\bar \eta}
\newcommand{\ut}{\tilde u}
\newcommand{\vt}{\tilde v}
\newcommand{\et}{\tilde \eta}
\newcommand{\la}{\lambda}
\newcommand{\p}{\partial}
\renewcommand{\vec}{\mathbf}
\newtheorem{theorem}{Theorem}[section]
\newtheorem{proposition}[theorem]{Proposition}
\newtheorem{lemma}[theorem]{Lemma}
\newtheorem{remark}[theorem]{Remark}
\newtheorem{Example}[theorem]{Example}
\definecolor{luh-dark-blue}{rgb}{0.0, 0.313, 0.608}
\definecolor{lred}{rgb}{1.0,0.5,0.5}
\title{Symmetric solutions of evolutionary partial differential equations}
\author{Gabriele Bruell, Mats Ehrnstr\"{o}m, Anna Geyer and Long Pei}
\address{Department of Mathematical Sciences, Norwegian University of Science and Technology, 7491 Trondheim, Norway.}
\email{gabriele.bruell@math.ntnu.no}
\email{mats.ehrnstrom@math.ntnu.no}
\address{Institute of Applied Mathematics, Delft University of Technology, 2628 CD Delft
The Netherlands.}
\email{A.Geyer@tudelft.nl}
\address{Department of Mathematics, KTH Royal Institute of Technology in Stockholm, 11428 Stockholm, Sweden.}
\email{longp@kth.se}
\thanks{Part of this research was carried out as A.G. visited Trondheim under the support of grant no. 231668 from the Research Council of Norway; G.B., M.E. and L.P. additionally recognise the support of grant no. 250070 from the same source.}
\begin{document}

\maketitle

\begin{abstract}
We show that for a large class of evolutionary nonlinear and nonlocal partial differential equations, symmetry of solutions implies very restrictive properties of the solutions and symmetry axes. These restrictions are formulated in terms of  three principles, based on the structure of the equations. The first principle covers equations that allow for steady solutions and shows that any spatially symmetric solution is in fact steady with a speed determined by the motion of the axis of symmetry at the initial time. The second principle includes equations that admit breathers and steady waves, and therefore is less strong: it holds that the axes of symmetry are constant in time. The last principle is a mixed case, when the equation contains  terms of the kind from both earlier principles, and there may be different outcomes; for a class of such equations one obtains that a spatially symmetric solution must be constant in both time and space. We list and give examples of more than \(30\) well-known equations and systems in one and several dimensions satisfying these principles; corresponding results for weak formulations of these equations may be attained using the same techniques. Our investigation is a generalisation of a local and one-dimensional version of the first principle from [E., Holden, and Raynaud, 2009] to nonlocal equations, systems and higher dimensions, as well as a study of the standing and mixed cases.
\end{abstract}

\section{Introduction}
\label{S-Intro}

In this article we investigate the consequences of a priori spatial symmetry of solutions to a class of partial differential equations of the general form  
\begin{equation}
\label{E-general equation}
    P(D)u_t=F(D,u),
\end{equation}
where  $u$ is a classsical, sufficiently smooth solution, $P$ is a linear Fourier multiplier operator and $F(D,u)$ is a nonlinear and possibly nonlocal function of $u$ and its derivatives, where we use the notation $D:= -i\partial_x$. Let \(I\) be an interval of existence for the equation \eqref{E-general equation}, typically of the form \([0,T)\). We then call a function  $u=u(t,x)$  \emph{spatially symmetric} if there exists a function $\lambda\in C^{1}(\R)$ such that for every $t \in I$ and $x\in \R$
    \begin{equation*}\label{sym def}
           u(t,x)=u(t,2\lambda(t)-x),
    \end{equation*} 
and we call \emph{$\lambda=\lambda(t)$} the \emph{axis of symmetry}. This is in the one-dimensional scalar case, and we shall later generalise this concept to systems and higher dimensions. 

Symmetries of solutions in partial differential equations have been studied for a long time and it is known that, for instance, rotationally invariant elliptic operators naturally impose symmetry of corresponding solutions. The most famous work in this direction is probably that of Gidas, Ni, and Nirenberg in \cite{GNN79}, which in turn was based on the method of moving planes, introduced by Alexandroff \cite{Alk} and Serrin \cite{S71}. Li  later generalised the results from \cite{GNN79}  to fully nonlinear elliptic equations \cite{Li91b, Li91}, and it has some implications for our results, as described below.

The strong connection between symmetry and steady solutions has been observed also in the context of water waves (first by Garabedian \cite{Garabedian1965} for periodic waves, and by Craig and Sternberg \cite{Craig1988} for solitary waves). This phenomenon in fact is not restricted to the Euler equations, but arises from a structural condition related to ellipticity, or more generally to the parity of the symbol appearing in the equation, see \cite{Ehrnstrom2009a, MR1749871, GNN79}. Indeed, as follows from the result \cite{Li91} by Li and the principle (P1) presented below, for a large set of equations the sets of spatially symmetric and travelling solutions completely coincide; an example of this behaviour is the Whitham equation, see \cite{MR3603270}. Note that this is not the case for the free-boundary Euler equations: although we give in Section~\ref{S-systems} the Euler equations as an example of systems belonging to principle (P1), and even though one can show that large classes of its steady solutions are symmetric \cite{ConEhrWah07,walsh2009}, there are also non-symmetric steady solutions \cite{KL2017a}. 

Our viewpoint is slightly different from that of \cite{GNN79} and related work: we \emph{assume} that one has a symmetric, generally time-dependent, solution of the evolutionary equation, with some given axes of symmetry, and study the consequences of it. As is seen from principles (P1) and (P3), this may be very restrictive, even enforcing zero solutions. Whereas the generic case in principle (P1) is hyperbolic, it is parabolic in (P2), and mixed in (P3). We emphasise that our considerations in this paper are exact, but we do not fix a functional-analytic setting: the proofs could be applied in a periodic or localised setting, on manifolds, or for weak solutions. In each case one should use the conditions given by that setting for uniqueness of the initial-value problem. In some cases, such as localised settings, one could sometimes further restrict the kind of possible solutions. We will generally assume that our solutions are well-defined and unique with respect to initial data on some time interval \(I\).

It will not be specified what \(I\) is, but in general one just needs some open set in time to conclude in the proofs. 

At the core of our results lies the following general, albeit local and one-dimensional, principle which states that for a large class of partial differential equations  the spatially symmetric solutions are a subclass of the steady solutions. The following version is taken from \cite[Thm 2.2]{Ehrnstrom2009a}.

\begin{theorem}[Principle (P1)]
\label{T-P1}
Let $P$ be a polynomial and consider the equation
    \begin{equation}
    \label{E-P1}
        P(\p_{x})u_{t} = F (\partial_x,u),
    \end{equation}
where $F (\partial_x, u) = \bar{F} (u, \p_{x}u, \dots , \p_{x}^{n}u)$ for $ n\in \N$,  and it is assumed that \eqref{E-P1} admits at most one classical solution \(u \colon \R \times I \to \R\) for given initial data $u_0=u(0, \cdot)$. 
    If $P$ is even in $\partial_x$ and $F$ is odd in the sense that
    \begin{equation}
    \label{E-F odd}
        \bar{F̄}(a_{0}, -a_{1}, a_{2},-a_{3},\dots ) = -\bar{F̄}(a_{0}, a_{1}, a_{2},a_{3},\dots),
    \end{equation}
    or if $P$ is odd in $\partial_x$ and $F$ is even in the sense that
    \begin{equation}
    \label{E-F even}
        \bar{F̄}(a_{0}, -a_{1}, a_{2},-a_{3}, \dots  ) = \bar{F}(a_{0}, a_{1}, a_{2},a_{3}, \dots  ),
    \end{equation}
 for all $a_i\in \R$, then any spatially symmetric solution of \eqref{E-P1}  is steady with speed \(\dot \lambda(t_0)\), \(t_0 \in I\).
\end{theorem}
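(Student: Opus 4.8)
The plan is to fix a time $t_0\in I$, set $c:=\dot\lambda(t_0)$, and show that the profile $u(t_0,\cdot)$ rigidly transported with speed $c$ is again a solution of \eqref{E-P1} carrying the same data at $t_0$; the assumed uniqueness then forces $u$ to equal it, which is precisely the assertion that $u$ is steady with speed $c$. Since \eqref{E-P1} has constant coefficients in $x$ we may translate in space so that $\lambda(t_0)=0$, and (translating in time if one likes) take $t_0=0$. Writing $\psi:=u(0,\cdot)$ and $g:=u_t(0,\cdot)+c\,u_x(0,\cdot)$, the candidate solution is
\[
  v(t,x):=\psi(x-ct).
\]
Because $v(t,\cdot)$ is just the shift of $\psi$ by $ct$, one has $P(\p_x)v_t(t,x)=-c\,(P(\p_x)\psi')(x-ct)$ and $\bar F(v,\p_xv,\dots,\p_x^nv)(t,x)=\bar F(\psi,\psi',\dots,\psi^{(n)})(x-ct)$, so $v$ solves \eqref{E-P1} for all $t$ as soon as the single identity $-c\,P(\p_x)\psi'=\bar F(\psi,\psi',\dots,\psi^{(n)})$ holds on $\R$. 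Using that $u$ itself solves \eqref{E-P1} at $t=0$, i.e.\ $P(\p_x)u_t(0,\cdot)=\bar F(\psi,\psi',\dots,\psi^{(n)})$, and that $u_t(0,\cdot)=g-c\,\psi'$, this identity is equivalent to $P(\p_x)g=0$. Note that only $P(\p_x)g=0$ is needed here, never $g=0$, so no invertibility of $P(\p_x)$ is required.

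Everything thus reduces to the parity statement $P(\p_x)g=0$. First, differentiating $u(t,x)=u(t,2\lambda(t)-x)$ in $x$ gives $u_x(t,2\lambda(t)-x)=-u_x(t,x)$, and differentiating it in $t$ gives $u_t(t,2\lambda(t)-x)=u_t(t,x)+2\dot\lambda(t)u_x(t,x)$; combining these at $t=0$ (where $\lambda(0)=0$) yields $g(-x)=g(x)$, so $g$ is even. Also $\psi=u(0,\cdot)$ is even by the symmetry of $u$ at $t=0$, hence $\psi',\psi'',\psi''',\dots$ have alternating parity (odd, even, odd, \dots). A short computation, invoking \eqref{E-F odd} in the first case and \eqref{E-F even} in the second, then shows that $\bar F(\psi,\psi',\dots,\psi^{(n)})$ is odd when $F$ is odd and even when $F$ is even, while $P(\p_x)\psi'$ is odd when $P$ is even and even when $P$ is odd. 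Therefore $P(\p_x)g=\bar F(\psi,\psi',\dots,\psi^{(n)})+c\,P(\p_x)\psi'$ has, in both cases, the parity \emph{opposite} to the one forced on $P(\p_x)$ applied to an even function — which is exactly what $P(\p_x)g$ is, since $g$ is even and $P$ has a definite parity. A function that is simultaneously even and odd vanishes, so $P(\p_x)g=0$. Consequently $v$ solves \eqref{E-P1} with $v(0,\cdot)=u(0,\cdot)$, and uniqueness gives $u=v$; that is, $u(t,x)=\psi(x-ct)$ is steady with speed $c=\dot\lambda(0)=\dot\lambda(t_0)$.

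The only genuinely delicate point is the parity bookkeeping in the second paragraph: one must correctly propagate the alternating parity of the derivatives of the even profile $\psi$ through the nonlinearity $\bar F$ (using the sign pattern in \eqref{E-F odd}/\eqref{E-F even}) and through the multiplier $P(\p_x)$, and then observe that these exactly conflict with the parity of $P(\p_x)g$. Beyond that there is a minor technical caveat about the direction in which uniqueness is applied, since data are prescribed at one end of $I$: but the conclusion that $u$ is steady with speed $\dot\lambda(t_0)$ for one $t_0$ already forces $\lambda$ to be affine on $I$ (for $u\not\equiv\mathrm{const}$), so it suffices to carry out the argument once from the relevant endpoint, in accordance with the remark that only an open set of times is needed.
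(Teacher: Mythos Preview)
Your proof is correct and follows essentially the same strategy as the paper: derive the steady-state identity \(-c\,P(\partial_x)\psi' = \bar F(\psi,\psi',\dots,\psi^{(n)})\) at a fixed time from the symmetry and parity assumptions, construct the travelling profile, and invoke uniqueness. The paper packages the parity step slightly more directly---it applies \(P(\partial_x)\partial_t - F\) to \(u(t,2\lambda(t)-x)\) and subtracts to obtain \(F(u)=-\dot\lambda\,P(\partial_x)u_x\) for all \(t\) in one stroke---whereas you reach the same conclusion at \(t_0\) via the auxiliary even function \(g=u_t(0,\cdot)+c\,\psi'\); the computations are equivalent.
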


Note that although \(t_0 \in I\) in Theorem~\ref{T-P1} is arbitrary, it is part of the conclusion that \(\dot\lambda\) is in fact constant on the entire interval of existence. It is of interest here that, as proved in \cite{Li91}, any positive classical solution $u$ of the steady nonlinear elliptic equation
\begin{align}\label{Eq G}
	-cu&=G(\partial_x, u), \qquad c > 0,
\end{align}
that satisfies decay properties at infinity is symmetric, provided that $G$ is a second-order, sufficiently smooth elliptic operator that is even in the sense of \eqref{E-F even}. Thus, for any equation of the form \(\partial_t u = F(\partial_x, u)\) which falls into the framework of Theorem \ref{T-P1}, and whose steady equation is of the form \eqref{Eq G} with the above conditions satisfied, the set of symmetric and steady solutions coincide.

One of the purposes of the current investigation is to generalise the above result to nonlocal equations, higher dimensions and systems. As showed in Section~\ref{subsec:1D nonlocal}, if one assumes that the nonlocal, and concurrently nonlinear terms are given as products of Fourier multipliers applied to \(u\), the conditions for $F$ to be odd or even translate to corresponding properties of the operator symbols. A second principle, appearing when $P$ and $F$ are of the same parity, is the following.

\begin{theorem}[Principle (P2)]
\label{T-P2}
Consider an equation of the form
    \begin{equation}
    \label{E-P2}
        P(\partial_x)u_{t} = F(\partial_x, u), 
    \end{equation}
 where $F$, $P$ and $u$ are as in Theorem~\ref{T-P1}, but we now assume either that both $P$ and $F$ are even, in the sense of \eqref{E-F even}, or that both $P$ and \(F\) are odd, in the sense of \eqref{E-F odd}.
  If \eqref{E-P2} admits at most one classical solution \(u \colon \R \times I \to \R\) for given initial data $u_0=u(0, \cdot)$, then any spatially symmetric solution of \eqref{E-P2} has a fixed axis of symmetry. 
\end{theorem}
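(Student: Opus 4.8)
The plan is to differentiate the symmetry relation $u(t,x)=u(t,2\lambda(t)-x)$ in time and exploit that $u$ solves the equation together with the assumed uniqueness, in parallel with the argument for Theorem~\ref{T-P1}, but drawing a weaker conclusion because $P$ and $F$ now share the same parity. First I would introduce the reflected profile $v(t,x):=u(t,2\lambda(t)-x)$ and observe that spatial symmetry says $v\equiv u$. Next I would compute how $v$ transforms under the equation: applying the chain rule, $\partial_x^k v(t,x) = (-1)^k (\partial_x^k u)(t,2\lambda(t)-x)$, so the parity hypotheses on $P$ (even or odd in $\partial_x$) and on $F$ (even or odd in the sense of \eqref{E-F even}, \eqref{E-F odd}) govern the signs that appear. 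The key point is that when $P$ and $F$ have the \emph{same} parity, the sign factors from $P(\partial_x)$ and from $F(\partial_x,\cdot)$ match, so after the reflection the equation $P(\partial_x)u_t=F(\partial_x,u)$ is preserved up to the extra term coming from the $t$-dependence of $\lambda$: namely $\partial_t v = (\partial_t u)(t,2\lambda-x) - 2\dot\lambda(t)(\partial_x u)(t,2\lambda-x)$. Thus $v$ satisfies
\begin{equation*}
    P(\partial_x)v_t = F(\partial_x, v) + 2\dot\lambda(t)\, P(\partial_x)\big[(\partial_x u)(t,2\lambda(t)-x)\big],
\end{equation*}
where I would track carefully that the extra $\partial_x$ inside introduces one more sign flip, so the correction term does not simply cancel as it did in the (P1) situation.

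The next step is to impose $v\equiv u$ and subtract the equation for $u$, which forces the correction term to vanish identically:
\begin{equation*}
    \dot\lambda(t)\, P(\partial_x)\big[(\partial_x u)(t,\cdot)\big] \equiv 0 \quad\text{for all } t\in I.
\end{equation*}
Here I would split into cases. If $\dot\lambda(t_0)\ne 0$ at some $t_0$, then by continuity $\dot\lambda\ne0$ on a neighbourhood, and on that neighbourhood $P(\partial_x)(\partial_x u)(t,\cdot)\equiv0$; combined with the evolution equation this should pin $u$ to a very rigid form — essentially that $P(\partial_x)u_t\equiv0$ as well, so $u$ and hence the data lie in the (typically low-dimensional) kernel of the relevant operator, contradicting genericity, or forcing $u$ to be one of the degenerate solutions for which the axis is constant anyway. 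The cleaner route, which I would pursue first, is to argue directly: define $w(t,x):=u(t,x+2\lambda(t)-2\lambda(t_0))$ or a similar translate and show, using uniqueness for the initial-value problem, that the only way symmetry about a moving axis is consistent with the same-parity structure is that the axis cannot actually move, i.e. $\dot\lambda\equiv 0$.

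I expect the main obstacle to be making the "forced vanishing" step genuinely rigorous without extra hypotheses: $\dot\lambda(t)\,P(\partial_x)\partial_x u(t,\cdot)\equiv0$ only gives information where $\dot\lambda\ne0$, and one must rule out the possibility that $u$ conspires to lie in $\ker(P(\partial_x)\partial_x)$ on precisely the time-interval where the axis moves. The right tool is the uniqueness assumption: knowing $v\equiv u$ for all time, one reflects the \emph{initial} data and runs both the forward solution and the reflected-and-translated solution, concluding they coincide; matching their time-derivatives at $t=t_0$ then yields $\dot\lambda(t_0)=0$ unless $u_0$ is constant along the group generated by $\partial_x$ — and since $t_0$ was arbitrary, $\lambda$ is constant on all of $I$. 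Care is needed that the reflection of a solution is again a solution (which is exactly where the same-parity hypothesis enters) and that the translate stays within the interval of existence, but since we only need an open set in time this is not restrictive.
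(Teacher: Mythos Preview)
Your approach has a genuine gap, and more importantly you are aiming at the wrong target. You try to reflect about the \emph{moving} axis $\lambda(t)$, which introduces the correction term $\dot\lambda(t)\,P(\partial_x)\partial_x u$, and then hope to force $\dot\lambda\equiv 0$. As you yourself note, the identity $\dot\lambda(t)\,P(\partial_x)\partial_x u(t,\cdot)\equiv 0$ does not yield $\dot\lambda\equiv 0$ without further assumptions; a solution that is constant in $x$ is symmetric about every axis, so $\dot\lambda$ can be arbitrary. The theorem does not claim $\dot\lambda\equiv 0$; it claims only that \emph{some} fixed axis exists.

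The paper's proof avoids the correction term entirely by reflecting about the \emph{fixed} initial axis $\lambda_0:=\lambda(0)$. Set $v(t,x):=u(t,2\lambda_0-x)$. Because $\lambda_0$ is constant there is no $\dot\lambda$ term, and the same-parity hypothesis gives immediately
\[
P(\partial_x)v_t(t,x)=P(\partial_x)u_t(t,2\lambda_0-x)=F(\partial_x,u)(t,2\lambda_0-x)=F(\partial_x,v)(t,x),
\]
so $v$ solves the same equation. Since $v(0,\cdot)=u(0,2\lambda_0-\cdot)=u(0,\cdot)$, uniqueness gives $v\equiv u$, i.e.\ $u$ is symmetric about the fixed axis $\lambda_0$ for all $t$. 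That is the whole proof. Your ``cleaner route'' gestures toward this idea, but still phrases the goal as $\dot\lambda(t_0)=0$ and speaks of translated solutions; the actual argument needs neither translation nor any analysis of $\dot\lambda$.
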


It is a consequence of Theorem~\ref{T-P2} that a solution of \eqref{E-P2} cannot be symmetric and travelling with a non-zero speed at the same time, since the axis of symmetry does not move (this is only possible if the solution is constant). When \eqref{E-P2} is furthermore parabolic of the form $\partial_t u =F(\partial_x, u)$ with \(F\) strictly elliptic, a result by Li \cite{Li91} shows that a positive classical solution tending to zero at infinity, which is unique with respect to its symmetric initial data having only one local maximum, remains symmetric with exactly one local maximum from which the function decreases monotonically. An example of solutions that fall into principle (P2) are symmetric breathers, appearing for instance in the nonlinear Schr\"odinger equation \cite{NLS-breather}. But there are also other solutions of equations in this category, that do not show temporal or spatial recurrence. These examples show that it is generally not possible to deduce stronger structural properties, such as in principle (P1), for equations of the form \eqref{E-P2}.

Whereas the first two principles concern equations whose right-hand side as a whole has a fixed parity, the next principle involves equations whose right-hand side is mixed in the sense that it involves terms of different parity.

\begin{theorem}[Principle (P3)]
\label{T-P3}
Consider the equation
    \begin{equation*}
    \label{E-P3}
        u_{t} = \left(F_1(u)\right)_x + F_2(\partial_x, u), 
    \end{equation*}
    where \(F_1 \colon \R \to \R\) is a local function with \(F_1^\prime\)  invertible in the range of any solution \(u\), and $F_2(\partial_x, u)=  \bar{F}_2 (u, \p_{x}u, . . . , \p_{x}^{n}u)$ is even in the sense of \eqref{E-F even}. Then any spatially symmetric solution of \eqref{E-P3} depends only on time (and hence is spatially constant). If, in addition, $F_2$ is of the form $F_2(\partial_x, u)=\partial_x G(\partial_x,u)$, the solution is constant also in time.
\end{theorem}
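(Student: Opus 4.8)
The plan is to use the standard reflection device together with the mixed parity structure of the right-hand side. Let $u$ be a spatially symmetric solution of $u_t = (F_1(u))_x + F_2(\partial_x, u)$ with axis $\lambda \in C^1(\R)$, and introduce the reflected function
\[
 v(t,x) := u(t, 2\lambda(t) - x),
\]
which coincides with $u$ by the symmetry assumption. Writing $y := 2\lambda(t)-x$, one has $v_t(t,x) = u_t(t,y) + 2\dot\lambda(t)\,u_y(t,y)$ and $\partial_x^k v(t,x) = (-1)^k(\partial_y^k u)(t,y)$ for $k \geq 1$. Substituting the equation for $u$ at the point $(t,y)$ and re-expressing each $y$-derivative of $u$ through the corresponding $x$-derivative of $v$, the even-parity hypothesis \eqref{E-F even} on $F_2$ cancels precisely the sign changes carried by the odd-order derivatives, so the $F_2$-term becomes $F_2(\partial_x, v)$; the local flux transforms as $(F_1(u))_y(t,y) = F_1'(u(t,y))\,u_y(t,y) = -(F_1(v))_x(t,x)$, acquiring a single minus sign. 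This gives
\[
 v_t = -(F_1(v))_x + F_2(\partial_x, v) - 2\dot\lambda\,v_x .
\]
Since $v = u$, subtracting this from $u_t = (F_1(u))_x + F_2(\partial_x, u)$ eliminates both $u_t$ and the even term $F_2$, leaving the pointwise identity
\[
 \big(F_1'(u(t,x)) + \dot\lambda(t)\big)\,u_x(t,x) = 0 \qquad \text{for all } (t,x).
\]

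Next I would upgrade this to $u_x \equiv 0$. Fix $t$ and set $\Omega_t := \{x \in \R : u_x(t,x) \neq 0\}$, which is open. On any connected component $J$ of $\Omega_t$ the identity forces $F_1'(u(t,x)) = -\dot\lambda(t)$ for all $x \in J$, so $x \mapsto F_1'(u(t,x))$ is constant on $J$; since $F_1'$ is injective on the range of $u$, the value $u(t,x)$ must then be the same for all $x \in J$, whence $u_x(t,\cdot)$ vanishes on $J$, contradicting $J \subset \Omega_t$. Therefore $\Omega_t = \emptyset$, and as $t$ was arbitrary, $u_x \equiv 0$: the solution depends only on time, and any such $u$ is trivially spatially symmetric about every axis. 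For the additional claim, suppose $F_2(\partial_x, u) = \partial_x G(\partial_x, u)$. Since $u = u(t)$ is spatially constant, all of its spatial derivatives vanish, so $G(\partial_x, u)(t,\cdot)$ is constant in $x$ and hence $\partial_x G(\partial_x, u) \equiv 0$, while also $(F_1(u))_x = F_1'(u)\,u_x \equiv 0$; the equation then collapses to $\dot u(t) = 0$, so $u$ is constant in time as well.

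The reflection computation itself is routine bookkeeping of signs, entirely parallel to the proof of Theorem~\ref{T-P1}, with the parity of the Fourier symbol $P$ there replaced here by the fact that the flux term $\partial_x F_1(u)$ is odd in the sense of \eqref{E-F odd} while $F_2$ is even. I expect the only genuinely delicate point to be the passage from $(F_1'(u)+\dot\lambda)\,u_x = 0$ to $u_x \equiv 0$: one has to argue on each connected component of $\{u_x \neq 0\}$ separately, because $\dot\lambda$ depends on $t$ and $F_1'$ is assumed only injective --- not, say, monotone --- on the range of $u$. It is also worth recording that, in contrast to Principles (P1) and (P2), no uniqueness hypothesis for the initial-value problem is needed here; the symmetry assumption alone already yields an algebraic constraint at every point.
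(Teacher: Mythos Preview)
Your proof is correct and follows essentially the same route as the paper's: derive the reflected equation, subtract to obtain $(F_1'(u)+\dot\lambda)\,u_x=0$, and conclude that $u$ is spatially constant from the invertibility of $F_1'$. Your connected-component argument for the passage to $u_x\equiv 0$ is in fact more carefully spelled out than the paper's one-line appeal to invertibility, and your observation that no uniqueness hypothesis is needed here is also correct.
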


The form of \eqref{E-P3} is more general than that of the corresponding two earlier equation types, but there appears to be fewer name-given equations strictly in this category (not belonging to (P1) or (P2)). To show the generality of these principles, we list below quite a few equations to which they are applicable. In several cases, this holds also for their nonlocal, higher-dimensional, or system generalisations, as exemplified in the later sections. The principles apply to equations including the following (references chosen with brevity as leading precept):\\[-12pt]

\begin{itemize}
    \item[(P1)] In \cite{Lannes13a}, Korteweg--de Vries type, Benjamin--Bona--Mahony, Camassa--Holm,\\ Degasperis--Procesi, Green--Naghdi,  Kadomtsev--Petviashvili, Kawahara,\\ Whitham; in \cite{GS07}, Dym, Rosenau--Hyman, Khokhlov--Zabolotskaya,\\ Hirota--Satsuma, Hunter--Saxton; in \cite{Hirota},
    Kaup--Kuperschmidt, Sawada--Kotera; in \cite{PS11}, two-component Camassa--Holm; in \cite{A09},  Benjamin--Ono; in \cite{GX09}, Geng-Xue;
    and in \cite{O78}, Ostrovsky.\\[-12pt]
    
    \item[(P2)] In \cite{Wu01}, heat, porous medium, fast diffusion, thin film, Cahn--Hilliard;
    In \cite{TPDE3}, Kolmogorov--Petrovsky--Piscounov, Fitzhugh--Nagumo; in \cite{Z10},
    Keller--Segel, and in \cite{Lannes13a}, nonlinear Schr\"{o}dinger.\\[-12pt]
    
    \item[(P3)] In \cite{GS07}, Burgers, Kuramoto--Sivashinsky; in \cite{HH}, Buckmaster; in \cite{SZJ07}, derivative nonlinear Schr\"{o}dinger. 
\end{itemize}

\medskip

In the next section we present the basic proofs of the principles (P1), (P2) and (P3) for local equations, and then generalise these to the nonlocal setting. The latter includes for instance, in the case of principle (P1), the Benjamin-Ono equation and equations of general Whitham type, and, in the case of the principle (P2), the nonlocal Keller--Segel equation. In Section \ref{S-higher dimensions} we extend the principles to higher dimensions (in which one could have one or several axes of symmetry), and in Section~\ref{S-systems} to systems. An example in this last section is the 2D-Euler equations in physical vacuum, for which we show that any horizontally symmetric solution is steady. 
\medskip

Finally, note that all results in this paper are stated for real or complex classical solutions defined on \(I \times \R^n\), but that these can be carried over to appropriate functional-analytic settings, including weak ones, by methods such as in \cite{Ehrnstrom2009a} and \cite{Geyer2016}. In general, such a setting is required for the uniqueness results that the principles rely on.

\section{The one-dimensional case}
\label{S-Local Principles}

We begin this section by proving the principles (P1), (P2) and (P3) in the local setting.

\subsection{Local equations}

Consider a local partial differential equation of the form
\[
	P(\partial_x)u_t = F(\partial_x,u),
\]
where $u$ is a sufficiently smooth function, $P$ is a polynomial in $\partial_x$ with constant coefficients, and $F$ is a nonlinear function of $u$ and its derivatives. If $P$ and $F$ have opposite parity in the sense of \eqref{E-F odd} and \eqref{E-F even}, then principle (P1) formulated in Theorem \ref{T-P1} guarantees that any  spatially symmetric solution is steady, provided that the equation admits a unique solution with respect to given initial data. We give here the proof of that principle, which can also be found in \cite{Ehrnstrom2009a}, because the philosophy behind it is the basis for the more general proofs and principles to come. \emph{Note that in this and all proofs to come, we will suppress the dependence upon \(\partial_x\) or \(D\) in the notation for the nonlinear operators, and simply write \(F(u)\)}. We keep the dependence elsewhere to emphasise the general nonlinear form of \(F(\partial_x,u)\).

\begin{proof}[Proof of Theorem \ref{T-P1} (Principle (P1))]
Assume that $P$ is even and $F$ is odd (the proof of the other case is similar).  Observe
 first that if $\bar u(t,x)=v(x-ct)$ is a
 steady solution, then
 \begin{align*}
   P(\partial_x)\partial_t\bar u=P(\partial_x)(-c\partial_x v(x-ct))=-cP(\partial_x)(\partial_x v)(x-ct)
 \end{align*}
 and
 \begin{align*}
   F(\bar u)=F(v(x-ct))=F(v)(x-ct).
 \end{align*}
Therefore, $v$ satisfies
 \begin{equation}\label{eq:U}
   \big(P(\partial_x) \bar u_t - F(\bar u)\big)(t,x)= \big(- c P(\partial_x) v_x-F(v)\big)(x-ct)=0,
 \end{equation} 
and we find that $\bar u$ is a steady solution if and only if $-cP(\partial_x)(\partial_x v)=F(v)$. Let now $u(t,x) = {u(t,2\lambda(t)-x)}$ be a spatially symmetric solution of $P(\partial_x) u_t = F(u)$. Then 
 \begin{align*}
   0 &= (P(\partial_x) \partial_t - F)(u(t,x))\\ 
   &= (P(\partial_x) \partial_t - F)(u(t,2\lambda(t)-x))\\ 
   &= \left(P(\partial_x) (u_t + 2\dot\lambda u_x) + F(u)\right) \big|_{(t,2\lambda(t)-x)},
 \end{align*} 
 where we have used the oddness and evenness of $F$ and $P(\partial_x)$, respectively. Because $x$ is
 arbitrary, we infer that
 \[
 P(\partial_x) u_t = F(u) = -P(\partial_x) (u_t + 2\dot\lambda u_x),
 \]
 and therefore
 \[
 F(u) = -\dot\lambda P(\partial_x) u_x.
 \]
 Fix a time $t_0$, define $c=\dot\lambda(t_0)$, and introduce the function
 \begin{equation*}
   \bar u(t,x)= u(t_0, x-c(t-t_0)).
 \end{equation*}
The function $\bar u$ defines a steady solution since it satisfies equation \eqref{eq:U}. It also coincides with $u$ at $t = t_0$, that is, $\bar u(t_0,\cdot)=u(t_0,\cdot)$.
 From uniqueness with respect to initial
 data, it follows that $u(t,x) =u(t,2\lambda(t) - x) =
 u(t_0, x-c(t-t_0))=\bar u(t,x)$ for all $t$, and thus \(u\) is steady with speed \(c = \dot\lambda(t_0)\). 
\end{proof}

We turn now to the proof of principle (P2), which in fact is so short that it is almost trivial in this setting. It determines the symmetry axis \(\lambda\) when $P$ and $F$ are of the same parity.

\begin{proof}[Proof of Theorem \ref{T-P2} (Principle (P2))]
Suppose $u_0(x)=u_0(2\lambda_0-x)$ and $u$ is a unique solution of \eqref{E-P2} with respect to initial data. Set $v(t,x):= u(t, 2\lambda_0-x)$. Assume that $P$ and $F$ are even, then
\[
	P(\partial_x)v_t(t,x)=P(\partial_x)u_t(t,2\lambda_0-x)=F(u)(t,2\lambda_0-x)=F(v)(t,x).
\]
Since $v(0,x)=u(0,2\lambda_0-x)=u(0,x)$, the assertion follows by uniqueness of the solution with respect to initial data. The case when both \(P\) and \(F\) are odd   is analogous.
 \end{proof}

Whereas the first two principles treat equations whose left- and right-hand sides either have opposite or identical parity, we consider in Theorem \ref{T-P3} a type of equation whose right-hand side admits a mix of even and odd terms.

\begin{proof}[Proof of Theorem \ref{T-P3} (Principle (P3))]
Let $\la\in C^1(\R)$ and assume that $u(t,x)=u(t,2\la(t)-x)$ solves \eqref{E-P3} for all $(t,x)\in I \times\R$. Then 
\begin{align*}
\p_{t}u(t,x)&=u_{t}(t,2\lambda(t)-x)+2\dot{\lambda}(t)u_{x}(t,2\lambda(t)-x), \\
\partial_x (F_1(u))(t,x) &=  - (F_1^\prime(u) u_x) (t,2 \lambda(t) - x)
\end{align*}
and 
\begin{equation*}
    F_2(u)(t,x)=    F_2(u)(t,2\la(t)-x),
\end{equation*}
where the second equality follows from the oddness of \(\partial_x\) (recall that \(F_1\) is a function \(\R \to \R\), whence even in the sense of \eqref{E-F even}), and the last from the evenness of $F_2$. Therefore, 
\begin{equation}
  \label{E-P3 sym var}
        \big(u_{t}+2\dot{\lambda}(t)u_{x}\big)(t,2\lambda(t)-x)  =- (F_1^\prime(u) u_x) (t,2 \lambda(t) - x) + F_2(u)(t,2\lambda(t)-x).
\end{equation}
Since \eqref{E-P3 sym var} holds for all $(t,x)\in I \times \R$ we may evaluate both  \eqref{E-P3 sym var} and  \eqref{E-P3} at $(t,x)$, subtract the two equations and obtain that 
\begin{equation*}
\label{relation P3}
    \left(\dot \la + F_1^\prime(u) \right) u_x=0.
\end{equation*}
Since \(F_1^\prime\) is invertible in the range of \(u\), both $u_x=0$ and $\dot \la(t) + F_1^\prime(u) =0$ imply that $u$ is independent of $x$. If in addition $F_2(u)=\partial_x G(u)$, then the structure of \eqref{E-P3} implies that $u$ is constant in time, too. 
\end{proof}

\begin{remark}
Note that principle (P3) is based on the assumption of nonlinearity. When \(F_1\) is linear, then \(F_1^\prime\) is never invertible. As the example  $u(t,x)=\cos(x+t)$ being a solution of $\partial_t u = \partial_x u$ shows, the invertibility assumption cannot in general be made away with.
\end{remark}

\begin{remark}\label{rem:local p3 general}
The more general result, when considering an equation of the form
    \begin{equation}
    \label{E-P3R}
        u_{t} =F_1(\partial_x,u) + F_2(\partial_x,u),  
    \end{equation}
where $F_1$ and  \(F_2\) are odd and even operators in the sense of \eqref{E-F odd} and \eqref{E-F even}, respectively, is that, at any instant in time, a spatially symmetric solution of \eqref{E-P3R} is a steady solution of the equation $- \dot \lambda(t) u_x=F_1(\partial_x,u)$. 

\begin{proof}
Using the symmetry of \(u\) and the fact that $F_1$ is odd while $F_2$ is even, one obtains that $u$ also solves
    \[
	    u_t +2\dot{\lambda} u_x=-F_1(u)+F_2(u),
    \]
and  by subtracting this equation from \eqref{E-P3R}, one finds 
    \begin{equation*}
    \label{E1}
	    -\dot{\lambda}u_x=F_1(u).
    \end{equation*}
    \end{proof}
\noindent Therefore, \([t_1,t_2] \ni t \mapsto u(t,\cdot)\) is an orbit between steady states of \(-c \varphi_x=F_1(\partial_x,\varphi)\). Homoclinic orbits contain time-periodic solutions. As the following example shows, unless \(F_2\) is tailored to \(F_1\), uniqueness of even, non-trivial, steady states of the equation \(-c \varphi_x=F_1(\partial_x, \varphi)\) will guarantee that \(u\) is in fact trivial.    

\begin{Example}
 An example of \eqref{E-P3R} is the (viscous) KdV--Burgers equation,
    \begin{equation}\label{KdVB}
	    u_t = 6uu_x- u_{xxx}+\nu u_{xx}, \qquad \nu > 0,
    \end{equation}
with  $F_1(u)=6uu_x-u_{xxx}$, \(F_2(u) = \nu u_{xx}\). Using the symmetry assumption on \(u\) we obtain     \[
    -\dot{\lambda}u_x =6uu_x-u_{xxx}.
    \]
    Hence $u(t,\cdot)$ satsifies the steady KdV equation with respect to the wave speed $c=\dot{\lambda}(t)$ at any instant of time. In an \(L^2(\R)\)-setting this means
    \[
	    u(t,x)=\frac{1}{2}\dot{\lambda}(t)\sech^2 \left({\frac{1}{2}} \sqrt{\dot{\lambda}(t)}(x - \dot\lambda(t)t)\right),
    \]
    unless \(u(t;\cdot) =0\). Plugging $u$ into \eqref{KdVB}, one deduces after some computations that $\dot{\lambda}(t)=0$ for all $t\in I$, whence $u=0$ is the only possible classical solution in \(L^2(\R)\) of the KdV--Burgers equation which is symmetric at any instant of time. 
    \end{Example}
    
\end{remark}

\subsection{Nonlocal equations}\label{subsec:1D nonlocal}
Assuming that nonlocal, nonlinear, terms may be expressed using (several) Fourier multipliers, we may apply similar ideas as in the previous section to prove the respective principles for nonlocal equations. This is achieved by considering the equations on the Fourier side. The Fourier transform, of course, is well defined for tempered distributions, so this approach is essentially not less general than the methods applied for local solutions. 

Of concern are nonlinear, nonlocal equations of the form
\begin{equation*}
     P(D) u_t = F(D,u),
\end{equation*}
where $P$ is a (linear) Fourier multiplier operator and  $F(D,u)=\bar F(u,K_1,\ldots, K_m)$ is a nonlinear and nonlocal function of $u$,  and Fourier multiplier operators $K_i, i\in \{1,\ldots, m\}$; any derivatives of \(u\) may of course be included in the Fourier multipliers \(K_i\). Here, \(D = -\i \partial_x\), and on the Fourier side we study the equation
\begin{equation*}
    P(\xi) \hat{u}_t(t,\xi) = \F(\hat{u}(t,\cdot),\xi),
\end{equation*}
where $\hat{u}$ denotes the Fourier transform of $u$ with respect to the space variable and $\F(\hat{u}(t,\cdot),\xi) = \mathcal{F}(F(D,u))(t,\xi)$ and $P(\xi)$ is the symbol of $P(D)$. Note that the Fourier transform of $F$ is of the form
\begin{equation}\label{eq:Fourier form}
\F(\hat{u}(t,\cdot),\xi)=\sum_{k=1}^l h_k(\xi)\left[\Conv_{j=1}^{n_k} g_{j,k}(\cdot)\hat{u}(t,\cdot)\right](\xi),
\end{equation}
for some $l\in\N$, where $h_k$ and $g_{j,k}$, $j\in \{1,\ldots,n_k\}$, are Fourier multipliers and we define
\[
\Conv_{j=1}^n f_j = f_1 *\cdots*f_n,
\]
where for  $n=1$ we adopt the convention that $\Conv_{j=1}^1 f_j =f_1$.
Note that the second argument of \(\F\) denotes the variable \(\xi\) as it appears in the symbols \(h_k\) and \(g_{j,k}\). Therefore,
\begin{equation}\label{eq:notationF}
\F(\hat{u}(t,\cdot),-\xi) =\sum_{k=1}^l h_k(-\xi)\left[\Conv_{j=1}^{n_k} g_{j,k}(-\cdot)\hat{u}(t,\cdot)\right](\xi),
\end{equation}
and this is in general not the same as \(\F(\hat u(t,\cdot), \cdot)|_{-\xi}\). Because \(\widehat{\partial_x} = \i \xi\), the condition that \(F\) be odd or even translates to that  
\[
\xi \mapsto \F(\hat{u}(t,\cdot),\xi)
\]
be odd or even in $\xi$, respectively. Similarly, \(P\) even/odd means just that the symbol \(P(\xi)\) is even/odd in \(\xi\). And a solution $u$ is spatially symmetric around  $\lambda\in C^1(\R)$ exactly if 
\begin{equation}
\label{symmetric Fourier}
\hat{u}(t,\xi)=e^{-\i 2\lambda(t)\xi}\hat{u}(t,-\xi),
\end{equation}
for all $(t,\xi)\in I \times \R$.

\begin{theorem}[Principle (P1) for nonlocal equations]
\label{P1 non-local}
Consider the equation
\begin{equation}
\label{eq: nonlocal}
    P(D) u_t = F(D,u),
\end{equation}
where  $P(D)$ is a Fourier multiplier operator and the nonlinear, nonlocal operator $F(D,\cdot)$ is a pseudo-product of the form \eqref{eq:Fourier form}, and we assume that the equation has a unique solution $u=u(t,x)$ for each given initial datum $u_0=u(0,\cdot)$. If $\xi \mapsto P(\xi)$ is even and \(\xi \mapsto \F(\hat{u}(t,\cdot),\xi)\) is odd, or if $P$ is odd and $\F$ is even, then any spatially symmetric solution of \eqref{eq: nonlocal} is steady. 
\end{theorem}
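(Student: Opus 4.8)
The plan is to carry the argument over to the Fourier side, transporting the proof of the local Principle~(P1) to the nonlocal equation \eqref{eq: nonlocal}; the spatial reflection $x\mapsto 2\lambda(t)-x$ is now encoded in the phase relation \eqref{symmetric Fourier}. I treat the case $P$ even and $\F$ odd, the opposite case being identical up to signs. The first ingredient is a \emph{phase lemma}: attaching a factor $e^{-\i a\zeta}$ to each argument of a convolution $\Conv_{j} g_{j,k}(\cdot)\hat u(\cdot)$ contributes, since on its support the frequencies add up to $\xi$, a single overall factor $e^{-\i a\xi}$; hence the pseudo-product structure \eqref{eq:Fourier form} satisfies $\F\big(e^{-\i a\cdot}\hat w(\cdot),\xi\big) = e^{-\i a\xi}\,\F(\hat w(\cdot),\xi)$ for every $a$ and every admissible $\hat w$. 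In particular a travelling wave $\bar u(t,x)=\varphi(x-ct)$, for which $\hat{\bar u}(t,\xi)=e^{-\i ct\xi}\hat\varphi(\xi)$ and $\partial_t\hat{\bar u}(t,\xi)=-\i c\xi\,\hat{\bar u}(t,\xi)$, solves \eqref{eq: nonlocal} if and only if $-\i c\xi\,P(\xi)\hat\varphi(\xi)=\F(\hat\varphi,\xi)$ for all $\xi$, a closed condition to be verified at the end.

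The heart of the matter is the following identity for a symmetric solution $u$, which is the Fourier incarnation of the sign change produced by reflecting an odd operator, exactly as in the proof of Theorem~\ref{T-P1}: rewriting \eqref{symmetric Fourier} as $\hat u(t,-\xi)=e^{\i 2\lambda(t)\xi}\hat u(t,\xi)$, one has
\[
\F(\hat u(t,\cdot),\cdot)\big|_{-\xi} \;=\; -\,e^{\i 2\lambda(t)\xi}\,\F(\hat u(t,\cdot),\xi).
\]
I would prove this in two moves. First, reflecting the convolution variables $\eta_j\mapsto -\eta_j$ in \eqref{eq:Fourier form} rewrites $\big[\Conv_{j} g_{j,k}(\cdot)\hat u(t,\cdot)\big](-\xi)$ as $\big[\Conv_{j} g_{j,k}(-\cdot)\hat u(t,-\cdot)\big](\xi)$. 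Second, inserting $\hat u(t,-\cdot)=e^{\i 2\lambda\cdot}\hat u(t,\cdot)$ and pulling the phase out by the phase lemma produces $e^{\i 2\lambda\xi}\,\F(\hat u(t,\cdot),-\xi)$ in the precise sense of \eqref{eq:notationF}, whereupon the oddness of $\xi\mapsto\F$ gives the minus sign. This bookkeeping is the step I expect to be the main obstacle: one must keep strictly apart the two meanings of ``evaluating $\F$ at $-\xi$'' --- reflecting only the symbols $h_k$, $g_{j,k}$ as in \eqref{eq:notationF}, versus reflecting the whole function $\xi\mapsto\F$ --- and track how the phase $e^{-\i 2\lambda\xi}$ from \eqref{symmetric Fourier} threads through the iterated convolutions. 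Everything after this is formal.

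To conclude, differentiate \eqref{symmetric Fourier} in $t$ and use it once more to obtain $\hat u_t(t,\xi)=-\i 2\dot\lambda(t)\xi\,\hat u(t,\xi)+e^{-\i 2\lambda(t)\xi}\hat u_t(t,-\xi)$. Multiplying by $P(\xi)$, using that $P$ is even so that $P(\xi)\hat u_t(t,-\xi)=P(-\xi)\hat u_t(t,-\xi)=\F(\hat u(t,\cdot),\cdot)|_{-\xi}$ (equation \eqref{eq: nonlocal} at $-\xi$), applying the displayed identity, and subtracting \eqref{eq: nonlocal} at $\xi$, one arrives at
\[
\F(\hat u(t,\cdot),\xi)\;=\;-\,\i\,\dot\lambda(t)\,\xi\,P(\xi)\,\hat u(t,\xi),
\]
the Fourier image of $F(u)=-\dot\lambda\,P(\partial_x)u_x$. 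Now fix $t_0\in I$, put $c:=\dot\lambda(t_0)$, and define $\bar u$ by $\hat{\bar u}(t,\xi):=e^{-\i c(t-t_0)\xi}\hat u(t_0,\xi)$, i.e.\ $\bar u(t,x)=u(t_0,x-c(t-t_0))$. Then $\bar u(t_0,\cdot)=u(t_0,\cdot)$, and the phase lemma together with the last display at $t_0$ shows $P(\xi)\partial_t\hat{\bar u}(t,\xi)=\F(\hat{\bar u}(t,\cdot),\xi)$ for all $t$, so $\bar u$ solves \eqref{eq: nonlocal}. Uniqueness with respect to initial data forces $u=\bar u$, hence $u$ is steady; and, as $t_0\in I$ was arbitrary, $\dot\lambda$ is in fact constant on $I$.
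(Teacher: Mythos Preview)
Your proof is correct and follows essentially the same route as the paper's: the displayed identity you establish is precisely the paper's Lemma~\ref{main equality lemma}, proved by the same reflection-of-convolution-variables plus phase-extraction steps, and the remaining argument (deriving $\F(\hat u,\xi)=-\i\dot\lambda\,\xi\,P(\xi)\hat u$, constructing the travelling wave, invoking uniqueness) matches the paper's proof line by line. The only cosmetic difference is that you isolate the ``phase lemma'' explicitly at the outset, whereas the paper invokes it inline as ``an easy induction argument, similar to that in the proof of Lemma~\ref{main equality lemma}''.
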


The following lemma is the key ingredient in proving Theorem \ref{P1 non-local}. 

\begin{lemma}\label{main equality lemma} Under the assumptions of Theorem \ref{P1 non-local}, let $u$ be spatially symmetric around $\lambda\in C^1( \R)$. Then
\begin{equation}
\label{main equality}
    \F(\hat{u}(t,\cdot),\xi) = \pm e^{-\i 2\lambda(t)\xi} \F(\hat{u}(t,\cdot),\cdot)|_{-\xi},
\end{equation}
where the sign is positive/negative according to whether \(\xi \mapsto \F(\hat{u}(t,\cdot),\xi)\) is even/odd, respectively.
\end{lemma}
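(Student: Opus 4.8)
The plan is to verify the identity \eqref{main equality} termwise on the representation \eqref{eq:Fourier form}, using the symmetry relation \eqref{symmetric Fourier} inside each convolution factor. Fix $t$ and write, for a single summand of \eqref{eq:Fourier form},
\[
\F_k(\hat u(t,\cdot),\xi) = h_k(\xi)\left[\Conv_{j=1}^{n_k} g_{j,k}(\cdot)\hat u(t,\cdot)\right](\xi).
\]
The heart of the matter is the following computation on the convolution: substituting $\hat u(t,\eta) = e^{-\i 2\lambda(t)\eta}\hat u(t,-\eta)$ into each factor $g_{j,k}(\eta_j)\hat u(t,\eta_j)$ and then performing the change of variables $\eta_j \mapsto -\eta_j$ in the iterated integral, one should obtain
\[
\left[\Conv_{j=1}^{n_k} g_{j,k}(\cdot)\hat u(t,\cdot)\right](\xi)
= e^{-\i 2\lambda(t)\xi}\left[\Conv_{j=1}^{n_k} g_{j,k}(-\cdot)\hat u(t,\cdot)\right](-\xi);
\]
the key point is that the exponential phases $e^{-\i 2\lambda(t)\eta_j}$ multiply out to $e^{-\i 2\lambda(t)\sum \eta_j}$, and on the convolution at the point $\xi$ the arguments sum to $\xi$, so the combined phase is exactly $e^{-\i 2\lambda(t)\xi}$, independent of the integration variables and hence pulled out of the integral. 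I would do the $n_k=1$ case first (where this is just \eqref{symmetric Fourier} together with the substitution $\eta \mapsto -\eta$) and then either induct on $n_k$ or write the $n_k$-fold convolution as a single integral over the hyperplane $\eta_1 + \cdots + \eta_{n_k} = \xi$ and change variables $\eta_j \mapsto -\eta_j$ all at once.

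Once this convolution identity is in hand, multiply through by $h_k(\xi)$ and use the parity of the symbol. Recalling the notational convention \eqref{eq:notationF}, the right-hand side above multiplied by $h_k(\xi)$ is, up to the phase, $\F_k(\hat u(t,\cdot),-\xi)$ with $h_k$ evaluated at $+\xi$ rather than $-\xi$; writing $h_k(\xi) = \pm h_k(-\xi)$ according to whether $P$ — and hence, by the hypothesis tying the parity of $\F$ to that of $h_k$ and $g_{j,k}$, each $h_k$ together with the $g_{j,k}$ — is even or odd, one converts $h_k(\xi)$ into the factor $h_k(-\xi)$ needed to form $\F_k(\hat u(t,\cdot),\cdot)|_{-\xi}$, at the cost of the sign $\pm$. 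A little care is needed here: oddness or evenness of $\xi \mapsto \F(\hat u(t,\cdot),\xi)$ as a whole is what the theorem assumes, and one should note that this is exactly matched by the combined parity of the multipliers $h_k(-\cdot)$ and $g_{j,k}(-\cdot)$ appearing in \eqref{eq:notationF}, so that replacing every multiplier symbol $m(\cdot)$ by $m(-\cdot)$ multiplies the term by the sign $\pm 1$. Summing over $k$ then yields \eqref{main equality}.

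The main obstacle I expect is purely bookkeeping: keeping straight the three distinct operations — the sign flip of the argument of a symbol ($m(\cdot) \mapsto m(-\cdot)$), the evaluation of the resulting function of $\xi$ at $-\xi$, and the reflection substitution $\eta_j \mapsto -\eta_j$ inside the convolution integrals — and checking that the phase factors $e^{-\i 2\lambda(t)\eta_j}$ really do collapse to the single global factor $e^{-\i 2\lambda(t)\xi}$. There is no analytic difficulty (the convolutions are interpreted as in \eqref{eq:Fourier form}, e.g. for tempered distributions, and the substitution is just the dilation $\eta \mapsto -\eta$); the risk is a stray sign or a misplaced $-\xi$. To contain this I would treat $n_k = 1$ explicitly, state the general $n_k$ case as "iterating the same substitution", and verify the sign against the extreme cases (for instance $F = \partial_x$, $P = 1$: odd $\F$, sign negative, which should reproduce $\i\xi\, \hat u = -e^{-\i 2\lambda\xi}(-\i\xi)\hat u(t,-\xi)$, consistent with \eqref{symmetric Fourier}).
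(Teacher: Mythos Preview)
Your proposal is correct and follows essentially the same route as the paper's proof. The only organizational difference is that the paper first isolates a pure change-of-variables identity for the iterated convolution (their \eqref{indu}, proved by induction and not yet using the symmetry of $u$), and only afterwards substitutes \eqref{symmetric Fourier} and invokes the parity assumption; you instead insert \eqref{symmetric Fourier} directly into each factor and change variables in one stroke, which collapses the two steps but yields the same intermediate identity and the same use of parity at the end.
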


\begin{proof}
The Fourier transformation of $F(u)$ is
\[\F(\hat{u}(t,\cdot),\xi)=\sum_{k=1}^l h_k(\xi)\left[\Conv_{j=1}^{n_k} g_{j,k}(\cdot)\hat{u}(t,\cdot)\right](\xi).\]
Considering each term separately, we show by induction that
\begin{equation}\label{indu}
\left[\Conv_{j=1}^{n} g_{j}(\cdot)\hat{u}(t,\cdot)\right] \bigg|_{-\xi} =  \left[\Conv_{j=1}^{n} g_{j}(-\cdot)\hat{u}(t,-\cdot)\right]\bigg|_{\xi}.
\end{equation}
Clearly, this holds for $n=1$.  Assuming that \eqref{indu} holds $n-1$, one then obtains that
\begin{align*}
\left[\Conv_{j=1}^{n} g_{j}(\cdot)\hat{u}(t,\cdot)\right]\bigg|_{-\xi} &= \int_{-\infty}^\infty\left[\Conv_{j=1}^{n-1} g_{j}(\cdot)\hat{u}(t,\cdot)\right](y)g_n(-\xi-y)\hat{u}(t,-\xi-y) \, dy\\
&= \int^{-\infty}_\infty \left[\Conv_{j=1}^{n-1} g_{j}(\cdot)\hat{u}(t,\cdot)\right](-z)g_n(-\xi+z)\hat{u}(t,-\xi+z) \, dz \\
&=\int_{-\infty}^\infty\left[\Conv_{j=1}^{n-1} g_{j}(-\cdot)\hat{u}(t,-\cdot)\right](z)g_n(-(\xi-z))\hat{u}(t,-(\xi-z)) \, dz\\
&= \left[\Conv_{j=1}^{n} g_{j}(-\cdot)\hat{u}(t,-\cdot)\right]\bigg|_{\xi},
\end{align*}
where we used the substitution $y=-z$ in the second line, and the induction assumption in the third. From \eqref{indu} one gets to \eqref{main equality} as
\begin{align*}
\label{equalities}
\begin{split}
\F(\hat{u}(t,\cdot),\cdot)|_{-\xi} &= \sum_{k=1}^l h_k(-\xi)\left[\Conv_{j=1}^{n_k} g_{j,k}(-\cdot)\hat{u}(t,-\cdot)\right]\bigg|_{\xi} \\
&= \sum_{k=1}^l h_k(-\xi)\left[\Conv_{j=1}^{n_k} g_{j,k}(-\cdot)e^{\i 2\lambda(t)(\cdot)}\hat{u}(t,\cdot)\right]\bigg|_{\xi} \\
&= e^{\i 2\lambda(t)\xi} \F(\hat{u}(t,\cdot),-\xi)\\
&= -e^{\i 2\lambda(t)\xi} \F(\hat{u}(t,\cdot),\xi),
\end{split}
\end{align*}
where we have used the induction argument \eqref{indu} in the first line, the symmetry \eqref{symmetric Fourier} of \(u\) in the second, the form \eqref{eq:notationF} of \(\F\) in the third, and the oddness of \(\xi \mapsto \F(\hat u(t,\cdot), \xi)\) in the last. The corresponding argument when \(\xi \mapsto \F(\hat u(t,\cdot), \xi)\) is even yields instead a positive sign in \eqref{main equality}.
\end{proof}

Following the idea of the proof of Theorem~\ref{T-P1}, we are now ready to prove the analogous result for the nonlocal equation \eqref{eq: nonlocal}.

\begin{proof}[Proof of Theorem \ref{P1 non-local}]
We give the proof for the case when $P$ is even and $F$ is odd; the opposite case is analogous. Suppose that $u$ is a solution of \eqref{eq: nonlocal}, so that
\begin{equation}
\label{Fourier side}
    P(\xi) \hat{u}_t(t,\xi) = \F(\hat{u}(t,\xi),\xi).
\end{equation}
Assuming that \(u\) is spatially symmetric, we obtain from \eqref{symmetric Fourier} that $\hat u$ satisfies
\begin{align}
\label{u_t symm}
\p_t \hat{u}(t,\xi) &=e^{-\i 2\lambda(t)\xi}\hat{u}_t(t,-\xi)-\i 2\dot{\lambda}(t)\xi e^{-\i 2\lambda(t)\xi}\hat{u}(t,-\xi),
\end{align}
and from Lemma~\ref{main equality lemma} that
\begin{align}
\label{G symm}
 \F(\hat{u}(t,\cdot),\xi) = -e^{-\i 2\lambda(t)\xi} \F(\hat{u}(t,\cdot),\cdot)|_{-\xi}.
\end{align}
Since $\hat{u}$ solves \eqref{Fourier side}, the equalities \eqref{u_t symm} and \eqref{G symm} imply
\begin{equation}
\label{equation in symm}
      P( \xi)\left ( \hat{u}_t(t,-\xi)-\i 2\dot{\lambda}(t)\xi \hat{u}(t,-\xi) \right )= -\F(\hat{u}(t,\cdot),\cdot)|_{-\xi}.
\end{equation}
Now, evaluating \eqref{Fourier side} at $(t,-\xi)$ gives
\begin{equation*}
\label{equation at -xi}
        P( \xi) \hat{u}_t(t,-\xi) =  \F(\hat{u}(t,\cdot),\cdot)|_{-\xi},
\end{equation*}
and by subtracting  \eqref{equation in symm} one obtains
\begin{equation}
\label{sum}
\i\dot{\lambda}(t)\xi   P( \xi)\hat{u}(t,-\xi)=\F(\hat{u}(t,\cdot),\cdot)|_{-\xi}.
\end{equation}
Fix $t_0 \in I$ and let $c:=\dot{\lambda}(t_0)$. With
$\bar{u}(\cdot):=u(t_0,\cdot)$, its Fourier transform $\hat{\bar{u}}$ satisfies \eqref{sum}:
\begin{equation}
\label{sum t0}
    \i c\xi   P( \xi)\hat{\bar{u}}(-\xi)=\F(\hat{\bar{u}}(\cdot),\cdot)|_{-\xi}.
\end{equation}
As in the local proof one defines then a steady solution
\[
\tilde{u}(t,x):=\bar{u}(x-c(t-t_0)), \qquad \hat{\tilde{u}}(t,\xi)=\hat{\bar{u}}(\xi) e^{-\i c(t-t_0)\xi}.
\]
An easy induction argument, similar to that in the proof of Lemma~\ref{main equality lemma}, shows that
\[\F(\hat{\tilde{u}}(t,\xi),\xi) =\F(\hat{\bar{u}}(\xi),\xi)e^{-\i c(t-t_0)\xi}.\]
Hence,
\begin{align*}
      P( \xi) \frac{d}{dt} \hat{\tilde{u}}(t,\xi) - \F(\hat{\tilde{u}}(t,\xi),\xi)
        &= -\i c\xi  P( \xi)\hat{\bar{u}}(\xi) e^{-\i c(t-t_0)\xi} -\F(\hat{\bar{u}}(\xi),\xi)e^{-\i c(t-t_0)\xi}\\
        &=e^{-\i c(t-t_0)\xi}\left[-\i c\xi   P( \xi)\hat{\bar{u}}(\xi)-\F(\hat{\bar{u}}(\xi),\xi)\right].
\end{align*}
Evaluating \eqref{sum t0} at $\xi$ yields that this expression vanishes, so that $\tilde{u}$ is a solution of \eqref{eq: nonlocal}.
By construction, $\tilde{u}(t_0,\cdot) = \bar{u}(\cdot)= u(t_0,\cdot)$, so that  $\tilde{u}$ coincides with $u$ at $t_0$, and whence, by uniqueness, for all \(t \geq t_0\) in the interval of existence. Thus \(u\) is steady.
\end{proof}

\begin{Example}[Boussinesq--Whitham type equations]
An equation of the form
\begin{equation}
\label{Whitham type}
u_t +\left(n(u)+Lu\right)_x=0,
\end{equation}
where $n \colon \R \to \R$ is a local function, and the nonlocal operator $L$ is a Fourier multiplier operator with an even and real symbol \(m(\xi)\), includes the case of the Whitham, the Benjamin--Ono, and similar behaving equations, cf. \cite{HKNS06}. Rewriting \eqref{Whitham type} in accordance with the notation of Theorem \ref{P1 non-local}, the equation reads
\[u_t =F(D,u)\]
with $F(D,u) = -n^\prime(u)u_x-Lu_x$. The Fourier transform of $F(D,u)$ is 
\[\F(\hat{u}(t,\cdot),\xi)= - \i[\mathcal{F}(n^\prime(u))(t,\cdot)*[\cdot \, \hat{u}(t,\cdot)]](\xi)- \i\xi m(\xi) \hat{u}(t,\xi)\]
and since $m$ is even,
\begin{align*}
\F(\hat{u}(t,\cdot),-\xi) &=- \i[\mathcal{F}(n^\prime(u))(t,\cdot)* [-\cdot \hat{u}(t,\cdot)]](\xi) - \i (-\xi) m(-\xi) \hat{u}(t,\xi) \\
&=-\F(\hat{u}(t,\xi),\xi),
\end{align*}
in the notation of \eqref{eq:notationF}.
Hence, by Theorem \ref{P1 non-local}, any spatially symmetric classical solution of \eqref{Whitham type} which is unique with respect to the initial data is a travelling wave. 
\end{Example}

Just as principle (P1), the principles (P2) and (P3) can be generalised to the nonlocal setting as well.

\begin{theorem}[Principle (P2) for nonlocal equations]
\label{T-P2 nonlocal}
Consider the equation
\begin{equation}\label{equation for P2}
    P(D) u_t = F(D,u),
\end{equation}
where  $P(D)$ is a Fourier multiplier operator and $F(D,\cdot)$ is a pseudo-product of the form \eqref{eq:Fourier form}, and we assume that the equation has a unique solution $u=u(t,x)$ for each given initial datum $u_0=u(0,\cdot)$. If $\xi \mapsto P(\xi)$ and \(\xi \mapsto \F(\hat{u}(t,\cdot),\xi)\) have the same parity, then any spatially symmetric solution of \eqref{equation for P2} has a fixed axis of symmetry.
\end{theorem}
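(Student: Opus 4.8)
The plan is to imitate the local proof of Theorem~\ref{T-P2}: instead of following the time-dependent axis directly, I will show that the reflection of $u$ through the \emph{initial} axis solves the same equation with the same data, and then invoke uniqueness. Concretely, set $\lambda_0 := \lambda(0)$ and define $v$ through its Fourier transform by $\hat v(t,\xi) := e^{-\i 2\lambda_0\xi}\hat u(t,-\xi)$, so that on the physical side $v(t,x)=u(t,2\lambda_0-x)$. Evaluating the symmetry relation \eqref{symmetric Fourier} at $t=0$ gives $\hat v(0,\xi)=e^{-\i 2\lambda_0\xi}\hat u_0(-\xi)=\hat u_0(\xi)$, i.e. $v(0,\cdot)=u_0$. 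If $v$ is shown to solve \eqref{equation for P2}, then uniqueness with respect to initial data forces $v\equiv u$, which is exactly the statement that $\hat u(t,\xi)=e^{-\i 2\lambda_0\xi}\hat u(t,-\xi)$ for all $t\in I$; that is, the constant $\lambda_0$ is an axis of symmetry of $u$ for all time, which is the assertion.

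It then remains to verify that $v$ solves \eqref{equation for P2} on the Fourier side, i.e. that $P(\xi)\hat v_t(t,\xi)=\F(\hat v(t,\cdot),\xi)$. For the left-hand side, $P(\xi)\hat v_t(t,\xi)=e^{-\i 2\lambda_0\xi}P(\xi)\hat u_t(t,-\xi)$; evaluating \eqref{Fourier side} at $-\xi$ and using the parity of $P$ yields $P(\xi)\hat u_t(t,-\xi)=\sigma\,\F(\hat u(t,\cdot),\cdot)|_{-\xi}$, where $\sigma=+1$ if $P$ (and hence, by hypothesis, $\F$) is even and $\sigma=-1$ if both are odd. Thus the left-hand side equals $\sigma\, e^{-\i 2\lambda_0\xi}\F(\hat u(t,\cdot),\cdot)|_{-\xi}$. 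For the right-hand side, one establishes, by the same bookkeeping over the pseudo-product structure \eqref{eq:Fourier form} as in the proof of Lemma~\ref{main equality lemma} — expanding each convolution factor $g_{j,k}(\cdot)\hat v(t,\cdot)=g_{j,k}(\cdot)e^{-\i 2\lambda_0(\cdot)}\hat u(t,-\cdot)$, pulling the product of exponentials (which over convolution variables summing to $\xi$ collapses to $e^{-\i 2\lambda_0\xi}$) outside the convolutions, substituting $\zeta\mapsto-\zeta$ in each integration variable, and invoking the parity of $\F$ through \eqref{eq:notationF} — that $\F(\hat v(t,\cdot),\xi)=\sigma\, e^{-\i 2\lambda_0\xi}\F(\hat u(t,\cdot),\cdot)|_{-\xi}$ with the \emph{same} sign $\sigma$, precisely because $P$ and $\F$ share their parity. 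Comparing the two expressions gives $P(\xi)\hat v_t=\F(\hat v(t,\cdot),\xi)$, completing the verification.

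I expect the only genuine work to be the identity $\F(\hat v(t,\cdot),\xi)=\sigma\, e^{-\i 2\lambda_0\xi}\F(\hat u(t,\cdot),\cdot)|_{-\xi}$. Morally it is transparent: a Fourier multiplier operator commutes with translations, and the condition that $\F$ have a fixed parity is exactly the condition that $F(D,\cdot)$ commute, up to the sign $\sigma$, with the reflection $x\mapsto -x$; together these say that $F(D,\cdot)$ commutes, up to $\sigma$, with the reflection $x\mapsto 2\lambda_0-x$, and $P(D)$ does the same with the \emph{same} sign, so $v=u(t,2\lambda_0-\cdot)$ inherits the equation. Making this rigorous on the Fourier side, however, requires the same induction over the number of convolution factors as in Lemma~\ref{main equality lemma}, with careful tracking of the signs of the arguments of the symbols $h_k$ and $g_{j,k}$; this is routine but is exactly where care is needed. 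Finally, it is worth stressing that — unlike in Theorem~\ref{P1 non-local} — the argument only pins the axis and cannot in general be upgraded to steadiness or $x$-independence, which is consistent with the existence of symmetric breathers among the solutions of equations of the form \eqref{equation for P2}.
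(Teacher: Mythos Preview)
Your proposal is correct and follows essentially the same approach as the paper: define $v(t,x)=u(t,2\lambda_0-x)$, verify on the Fourier side that $v$ solves \eqref{equation for P2} via the same convolution/parity bookkeeping as in Lemma~\ref{main equality lemma}, and conclude by uniqueness of the initial-value problem. Your presentation is arguably a touch more direct in that you match both sides to the common expression $\sigma\, e^{-\i 2\lambda_0\xi}\,\F(\hat u(t,\cdot),\cdot)|_{-\xi}$ rather than pushing $\F(\hat v)$ all the way back to $\F(\hat u)$ as the paper does.
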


\begin{proof}
Assume without loss of generality that both $P$ and $F$ have even symbols. Let $v(t,x):=u(t,2\lambda_0-x)$. We first verify that $F(u)=F(v)$, meaning $\F(\hat u(t, \cdot), \xi)=\F(\hat v(t,\cdot), \xi)$. On the Fourier side, one has $\hat v(t,\xi)=e^{-\i 2\lambda_0\xi}\hat u(t,2\lambda_0-\xi)$ and
\begin{align*}
\F(\hat v (t, \cdot), \xi)&= \sum_{k=1}^l h_k(\xi)\left[\Conv_{j=1}^{n_k} g_{j,k}(\cdot)\hat{v}(t,\cdot)\right]\bigg|_{\xi} \\
&=\sum_{k=1}^l h_k(\xi)\left[\Conv_{j=1}^{n_k} g_{j,k}(\cdot)e^{-\i 2\lambda_0\xi}\hat{u}(t,-\cdot)\right]\bigg|_{\xi}\\
&=e^{-\i 2\lambda_0\xi}\sum_{k=1}^l h_k(\xi)\left[\Conv_{j=1}^{n_k} g_{j,k}(\cdot)\hat{u}(t,-\cdot)\right]\bigg|_{\xi}\\
&=e^{-\i 2\lambda_0\xi}\sum_{k=1}^l h_k(\xi)\left[\Conv_{j=1}^{n_k} g_{j,k}(-\cdot)\hat{u}(t,\cdot)\right]\bigg|_{-\xi}\\
&= e^{-\i 2\lambda_0\xi}\F(\hat u(t,\cdot),\cdot)|_{-\xi}\\ 
&= \F(\hat u(t,\cdot), \xi),
\end{align*}
where the second to last equality is due to \eqref{indu}, and the last follows from the evenness of $\xi \mapsto \F(\hat u(t,\cdot),\xi)$ and Lemma~\ref{main equality lemma}. A similar, but simpler, argument shows that  $P(D)u_t=P(D)v_t$. We deduce that $v$ solves the same equation as $u$, and the assertion follows from uniqueness of the initial-value problem.
\end{proof}

We finally state the nonlocal version of principle (P3). Its proof is similar to the proof of Theorem~\ref{T-P3} using \eqref{main equality}, and we leave out the details.

\begin{theorem}[Principle (P3) for nonlocal equations]
\label{T-P3 nonlocal}
Consider the equation
    \begin{equation}
    \label{E-P3 nonlocal}
        u_{t} = \left(F_1(u)\right)_x + F_2(D,u), 
    \end{equation}
    where \(F_1 \colon \R \to \R\) is local with \(F_1^\prime\) invertible in the range of any solution \(u\), and $F_2(D,\cdot)$ is a pseudo-product of the form \eqref{eq:Fourier form} with \(\F(\hat u(t,\cdot),\xi)\) even in \(\xi\).  
    Then any spatially symmetric solution of \eqref{E-P3 nonlocal} depends only on time. If, in addition, $F_2$ is of the form $F_2(D,u)=\partial_x G(D,u)$, then the solution is constant also in time.
\end{theorem}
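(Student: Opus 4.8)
The plan is to reproduce the proof of Theorem~\ref{T-P3} in physical space, feeding in the single nonlocal fact that is already packaged in Lemma~\ref{main equality lemma}. Suppose $u(t,x)=u(t,2\lambda(t)-x)$ solves \eqref{E-P3 nonlocal}. First I would record how the two pieces on the right-hand side respond to the symmetry. Since $F_1\colon\R\to\R$ is a genuine function, applying $F_1$ to the identity $u(t,x)=u(t,2\lambda(t)-x)$ shows that $F_1(u)$ is spatially symmetric around $\lambda(t)$ as well, so differentiating in $x$ gives
\[
  \partial_x\big(F_1(u)\big)(t,x)=-\big(F_1'(u)\,u_x\big)(t,2\lambda(t)-x).
\]
For the nonlocal term, Lemma~\ref{main equality lemma} applied to $F_2$ (whose Fourier transform $\F(\hat u(t,\cdot),\xi):=\mathcal F(F_2(D,u))(t,\xi)$ is even in $\xi$) yields \eqref{main equality} with the positive sign, namely $\F(\hat u(t,\cdot),\xi)=e^{-\i 2\lambda(t)\xi}\,\F(\hat u(t,\cdot),\cdot)|_{-\xi}$; since $\F(\hat u(t,\cdot),\cdot)|_{-\xi}=\mathcal F(F_2(D,u))(t,-\xi)$, this is exactly the relation \eqref{symmetric Fourier} for the function $F_2(D,u)$, i.e.\ $F_2(D,u)(t,x)=F_2(D,u)(t,2\lambda(t)-x)$.

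With these two facts the computation is literally that of Theorem~\ref{T-P3}. Differentiating $u(t,x)=u(t,2\lambda(t)-x)$ in time gives $\partial_t u(t,x)=(u_t+2\dot\lambda u_x)(t,2\lambda(t)-x)$, and inserting this together with the two parity identities into \eqref{E-P3 nonlocal} produces a "symmetric-variable" version of the equation valid for all $(t,x)$. Evaluating that version at $(t,x)$ (that is, replacing $2\lambda(t)-x$ by $x$) and subtracting \eqref{E-P3 nonlocal} itself, the $F_2$-terms cancel and one is left with
\[
  \big(\dot\lambda(t)+F_1'(u)\big)\,u_x=0 .
\]
Exactly as in the local case, invertibility of $F_1'$ in the range of $u$ forces $u$ to be independent of $x$: where $u_x\neq 0$ one would need $F_1'(u)=-\dot\lambda(t)$ to be constant in $x$, hence $u$ constant in $x$ by injectivity, a contradiction. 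So $u=u(t)$.

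For the additional statement, assume $F_2(D,u)=\partial_x G(D,u)$, so that \eqref{E-P3 nonlocal} reads $u_t=\partial_x\big(F_1(u)+G(D,u)\big)$. With $u$ spatially constant, $F_1(u)$ is spatially constant; and $G(D,u)$, being a pseudo-product of the form \eqref{eq:Fourier form}, is also spatially constant, because on the Fourier side a spatially constant function has transform supported at the origin and each Fourier multiplier merely rescales it by its symbol value there, products of such being again supported at the origin. Hence $\partial_x\big(F_1(u)+G(D,u)\big)=0$, so $u_t=0$ and $u$ is constant.

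I do not expect a genuine obstacle here — this is the reason the paper says the details may be omitted. The only points that need care are (a) reading \eqref{main equality} in the even case correctly as \emph{spatial symmetry} of $F_2(D,u)$, which requires keeping the notational distinction between $\F(\hat u(t,\cdot),-\xi)$ and $\F(\hat u(t,\cdot),\cdot)|_{-\xi}$ firmly in mind, and (b) the last step, which tacitly uses that the multiplier symbols occurring in $G$ are well-defined (and finite) at $\xi=0$ — automatic in the functional-analytic settings considered here. Note that, unlike Principles (P1) and (P2), this argument is purely algebraic and local in time, so no uniqueness or comparison-solution argument enters.
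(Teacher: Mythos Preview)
Your proposal is correct and follows precisely the route the paper indicates: it reproduces the proof of Theorem~\ref{T-P3} verbatim, invoking the even case of \eqref{main equality} from Lemma~\ref{main equality lemma} to obtain the spatial symmetry of $F_2(D,u)$ in place of the local evenness used there. Your reading of \eqref{main equality} as the Fourier-side statement \eqref{symmetric Fourier} for $F_2(D,u)$ is exactly the intended step, and your closing observations---that no uniqueness argument is needed, and that the final clause tacitly requires the symbols in $G$ to be finite at the origin---are accurate and worth recording.
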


\begin{remark}
The more general comment made in Remark~\ref{rem:local p3 general} holds true also in the case when both \(F_1\) and \(F_2\) are general nonlocal operators of the form \eqref{eq:Fourier form} with \(\xi \mapsto \F_1(\hat u(t,\cdot), \xi)\) odd and \(\xi \mapsto \F_2(\hat u(t,\cdot), \xi)\) even:  any spatially symmetric solution of         \(u_{t} =F_1(D,u) + F_2(D,u)\)  is, at each instant of time, a steady solution of the equation $- \dot \lambda u_x=F_1(D,u)$. 
\end{remark}

\section{Higher dimensions}
\label{S-higher dimensions}

As briefly mentioned in \cite{Ehrnstrom2009a}, the principle (P1) presented in Theorem \ref{T-P1} may be generalised to a multi-dimensional setting. In this section we give a rigorous proof of this result, and  state generalisations of all established principles to higher dimensions. Notice that in higher dimensions the symmetry condition on a solution $u=u(t,\vec x)$ does not necessarily have to hold in each component of $\vec x$. Generally, conclusions can be made only about the components for which one assumes spatial symmetry. The setting is as follows: we write \({\vec x} = ({\vec x}', {\vec x}'')\) and \(\boldsymbol{\xi} = (\boldsymbol{\xi}', \boldsymbol{\xi}'')\) as in
\begin{align*}
{\vec x} &= ({\vec x}', {\vec x}'') = (x_1, \ldots, x_q, x_{q+1}, \ldots, x_p) \in \R^q \times \R^{p-q},
\end{align*}
and consider \({\vec x}'\)-spatially symmetric solutions \(u\) such that
\begin{equation}\label{eq:x'-symmetric}
u(t,\vec{x}',\vec{x}'')=u(t,2\boldsymbol{\lambda}(t)-\vec{x}', \vec{x}''),
\end{equation}
for some axes of symmetry $\boldsymbol{\lambda} =(\lambda_1,\dots,\lambda_q) \in C^1(I,\R^q)$. We consider nonlinear, nonlocal operators \(F\) that are expressible as a sum of products of Fourier multipliers acting on \(u\); such are all pseudo-products, and may with the help of multivariate symbols be expressed as sums of convolutions on the Fourier side. In analogy with the one-dimensional case, an operator \(F\) with \({\mathcal F}(F({\boldsymbol D},u))({\boldsymbol \xi}) = \F(\hat u(t,\cdot), {\boldsymbol \xi})\) will be called even or odd according to whether the function
\begin{equation}\label{eq:higher-dim Fourier parity}
{\boldsymbol \xi}' \mapsto \F(\hat u(t, \cdot), ({\boldsymbol \xi}',{\boldsymbol \xi}''))
\end{equation}
is even or odd (so that the total symbol of \(F\) is even or odd in \({\boldsymbol \xi}'\)). In the case of a local operator \(F(\partial_{\vec x},u)\)  depending only on $u$ and its derivatives, this parity coincides with that of 
 \begin{equation}
\label{mE-F x}
    {\vec x}' \mapsto F (\partial_{\vec x},u(t,\cdot))({\vec x}', {\vec x}'').
\end{equation}
Note that we choose this shorter way of expressing the oddness/evenness assumption presented first in Theorem~\ref{T-P1}, since the conditions otherwise become quite cumbersome to express in the multidimensional setting. We believe, however, that the former is more informative in the original setting.

\begin{theorem}[Principle (P1) for  higher dimensions]
\label{mT-P1}
Consider the equation
    \begin{equation}
    \label{mE-P1}
        P({\boldsymbol D})u_{t} = F ({\boldsymbol D},u), 
    \end{equation}
where $P({\boldsymbol D})$ is a multivariate Fourier multiplier operator, and \(F({\boldsymbol D},\cdot)\) is a multivariate pseudo-product that may be expressed on the Fourier side via \({\mathcal F}(F({\boldsymbol D},u))({\boldsymbol \xi}) = \F(\hat u(t,\cdot), {\boldsymbol \xi})\). Assume that \eqref{mT-P1} admits at most one classical solution $u=u(t, \vec x)$ for given initial data $u_0=u(0, \cdot)$, and let $u$ be an \({\vec x}'\)-symmetric solution in the sense of \eqref{eq:x'-symmetric}. If the functions \({\boldsymbol \xi}' \mapsto \F(\hat u(t, \cdot), ({\boldsymbol \xi}',{\boldsymbol \xi}''))\)  and \({\boldsymbol \xi}' \mapsto P({\boldsymbol \xi}',{\boldsymbol \xi}'')\) are of different parity, then $u$ is steady in the \({\vec x}'\)-direction: there exists a constant vector ${\vec c} \in \R^q$ such that
\[
u(t, {\vec x})= u_0({\vec x}' - {\vec c} t, {\vec x}'').
\]
\end{theorem}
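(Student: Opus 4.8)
The plan is to mirror the one-dimensional nonlocal proof (Theorem~\ref{P1 non-local}) componentwise in the symmetric directions \({\vec x}'\), replacing the single exponential factor \(e^{-\i 2\lambda(t)\xi}\) by the product \(e^{-\i 2\boldsymbol{\lambda}(t)\cdot\boldsymbol{\xi}'}\) while carrying the \({\vec x}''\)-variables as passive parameters. First I would record the Fourier-side form of the \({\vec x}'\)-symmetry \eqref{eq:x'-symmetric}, namely
\[
\hat u(t,\boldsymbol{\xi}',\boldsymbol{\xi}'') = e^{-\i 2\boldsymbol{\lambda}(t)\cdot\boldsymbol{\xi}'}\,\hat u(t,-\boldsymbol{\xi}',\boldsymbol{\xi}''),
\]
and differentiate in \(t\) to obtain the analogue of \eqref{u_t symm}, where the term \(-\i 2\dot{\boldsymbol{\lambda}}(t)\cdot\boldsymbol{\xi}'\) appears as the inner product. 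Next I would establish the multivariate analogue of Lemma~\ref{main equality lemma}: since \(F\) is a pseudo-product, its symbol is a finite sum of terms \(h_k(\boldsymbol{\xi})\big[\Conv_j g_{j,k}(\cdot)\hat u(t,\cdot)\big](\boldsymbol{\xi})\), where the convolutions are now over \(\R^p\); the same induction as in \eqref{indu} — with the substitution \(y=-z\) replaced by the reflection \(\mathbf{y}'\mapsto-\mathbf{y}'\) in the \({\vec x}'\)-block only — gives
\[
\F(\hat u(t,\cdot),({\boldsymbol\xi}',{\boldsymbol\xi}'')) = \pm\, e^{-\i 2\boldsymbol{\lambda}(t)\cdot\boldsymbol{\xi}'}\,\F(\hat u(t,\cdot),\cdot)\big|_{(-{\boldsymbol\xi}',{\boldsymbol\xi}'')},
\]
with the sign dictated by the parity of \({\boldsymbol\xi}'\mapsto\F(\hat u(t,\cdot),({\boldsymbol\xi}',{\boldsymbol\xi}''))\). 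The key point is that the convolution in the \({\vec x}''\)-variables is untouched by the reflection, so the \({\vec x}''\)-slots simply ride along.

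With these two identities in hand, I would substitute into the equation on the Fourier side \(P({\boldsymbol\xi})\hat u_t(t,{\boldsymbol\xi}) = \F(\hat u(t,\cdot),{\boldsymbol\xi})\), evaluate also at \((-{\boldsymbol\xi}',{\boldsymbol\xi}'')\), and subtract, exactly as in the passage from \eqref{equation in symm} to \eqref{sum}. Using that \(P\) and \(\F\) have opposite parity in \({\boldsymbol\xi}'\), the \(\F\)-terms and the \(\hat u_t\)-terms cancel pairwise and one is left with
\[
\i\,\big(\dot{\boldsymbol{\lambda}}(t)\cdot{\boldsymbol\xi}'\big)\,P({\boldsymbol\xi}',{\boldsymbol\xi}'')\,\hat u(t,-{\boldsymbol\xi}',{\boldsymbol\xi}'') = \F(\hat u(t,\cdot),\cdot)\big|_{(-{\boldsymbol\xi}',{\boldsymbol\xi}'')}.
\]
Then I fix \(t_0\in I\), set \({\vec c} := \dot{\boldsymbol{\lambda}}(t_0)\), define the candidate solution \(\tilde u(t,{\vec x}) := u_0({\vec x}'-{\vec c}t,{\vec x}'')\) with Fourier transform \(\hat{\tilde u}(t,{\boldsymbol\xi}) = \hat u_0({\boldsymbol\xi})e^{-\i({\vec c}\cdot{\boldsymbol\xi}')t}\), and check via an easy induction (as for the translation invariance used at the end of the proof of Theorem~\ref{P1 non-local}) that \(\F(\hat{\tilde u}(t,\cdot),{\boldsymbol\xi}) = \F(\hat u_0,{\boldsymbol\xi})e^{-\i({\vec c}\cdot{\boldsymbol\xi}')t}\); plugging into the equation and using the displayed identity at \(t_0\) shows \(\tilde u\) solves \eqref{mE-P1} with \(\tilde u(t_0,\cdot)=u(t_0,\cdot)\). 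Uniqueness then forces \(u=\tilde u\), i.e. \(u\) translates with constant velocity \({\vec c}\) in the \({\vec x}'\)-block, which is the assertion (after relabelling \(t_0=0\); as in the one-dimensional case the conclusion that \({\vec c}\) is independent of \(t_0\) is then automatic).

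The main obstacle, and the only genuinely new bookkeeping relative to the scalar one-dimensional case, is the multivariate convolution identity: one must be careful that reflecting only the primed block of the integration variable in an iterated convolution over \(\R^p\) produces exactly \(g_{j,k}(-(\cdot)',(\cdot)'')\hat u(t,-(\cdot)',(\cdot)'')\) and nothing more, and that this is compatible with the mixed-parity symbols \(h_k\) and \(g_{j,k}\) — which need not themselves be even or odd, only the total symbol \({\boldsymbol\xi}'\mapsto\F(\hat u(t,\cdot),({\boldsymbol\xi}',{\boldsymbol\xi}''))\) must have a definite parity. Everything else is a direct transcription of the arguments already given, with scalar products \(\boldsymbol{\lambda}\cdot\boldsymbol{\xi}'\) and \({\vec c}\cdot\boldsymbol{\xi}'\) in place of the scalar products \(\lambda\xi\) and \(c\xi\).
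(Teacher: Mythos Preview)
Your proposal is correct and faithfully executes the strategy the paper itself indicates: the authors state that Theorem~\ref{mT-P1} ``may be proved using the same arguments as in the proof of Theorems~\ref{T-P1} and~\ref{P1 non-local}'', but then choose to write out only the \emph{local} case \(F(\partial_{\vec x},u)\) in physical space, computing \(\partial_t u\), \(\partial_{\vec x'}^{\boldsymbol\alpha}u\) and \(\partial_{\vec x''}^{\boldsymbol\alpha}u\) at the reflected point directly and deriving \(-\dot{\boldsymbol\lambda}(t)\cdot P(\nabla_{\vec x})\nabla_{\vec x'}u = F(u)\) before introducing the steady candidate \(\bar u\). You instead write out the Fourier-side argument for the general pseudo-product case, which is the transcription of Theorem~\ref{P1 non-local} the paper alludes to but omits.

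The two routes are entirely parallel and neither is more difficult; the paper's choice illustrates points (i) and (ii) (that several axes of symmetry and the passive \({\vec x}''\)-directions cause no trouble) with the minimum of notation, while your choice has the advantage of covering the full nonlocal statement as actually formulated. Your identification of the only new bookkeeping --- the block-reflection \(\mathbf y'\mapsto -\mathbf y'\) in the multivariate convolution, leaving the \({\vec x}''\)-integration untouched --- is exactly right, and your remark that only the \emph{total} symbol needs a definite parity in \({\boldsymbol\xi}'\) is consistent with how the paper uses Lemma~\ref{main equality lemma}.
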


Theorem~\ref{mT-P1} may be proved using the same arguments as in the proof of Theorems~\ref{T-P1} and~\ref{P1 non-local}. We provide here the main steps in the local case \(F(\partial_{\vec x},u)\) to convince the reader that (i) one may introduce an \({\vec x}'\)-steady solution even when there are several axes of symmetry, and (ii) that the \({\vec x}''\)-direction does not affect the proof in any significant way.\footnote{It is easy to come up with a pseudo-product mixing directions in a nonlocal way, such that the separation between the \({\vec x}'\)- and \({\vec x}''\)-directions would be violated. Such operators, however, are excluded by our parity assumptions.}

\begin{proof}[Proof of Theorem \ref{mT-P1}, local case]

Let $u=u(t,{\vec x})$ be a classical solution of \eqref{mE-P1}, which is symmetric in the ${\vec x}'$-variable in the sense of \eqref{eq:x'-symmetric}.
In multi-index notation, we have
\begin{align*}
\label{2D_relations}
\begin{split}
&\p_t u(t,{\vec x}', {\vec x}'') = u_t (t,2 {\boldsymbol \lambda}(t)- {\vec x}', {\vec x}'')+2\dot{\boldsymbol{\lambda}}(t)\nabla_{{\vec x}'} u(t,2\boldsymbol{\lambda}(t)- {\vec x}', {\vec x}''),\\
&\p_{{\vec x}'}^{\boldsymbol \alpha} u(t, {\vec x}', {\vec x}'')= (-1)^{|{\boldsymbol \alpha}|}  ( \p_{{\vec x}'}^{\boldsymbol \alpha} u)(t,2\boldsymbol{\lambda}(t)- {\vec x}', {\vec x}''),\\
&\p_{{\vec x}''}^{\boldsymbol \alpha} u(t, {\vec x}', {\vec x}'')=   ( \p_{{\vec x}''}^{\boldsymbol \alpha} u)(t,2\boldsymbol{\lambda}(t)- {\vec x}', {\vec x}''),
\end{split}
\end{align*}
which very clearly summarises the consequences of the \({\vec x}'\)-symmetry assumption. Assuming that the symbol \(P({\boldsymbol \xi}', {\boldsymbol \xi}'')\) is even in \({\boldsymbol \xi}'\), and that \(F\) is odd in the sense of \eqref{mE-F x}, one also obtains that 
\begin{equation}
\label{sv1}
P(\nabla_{\vec x}) \p_t u(t,{\vec x}', {\vec x}'') = P(\nabla_{\vec x}) \big( u_t (t,2 {\boldsymbol \lambda}(t)- {\vec x}', {\vec x}'')+2\dot{\boldsymbol{\lambda}}(t)\cdot \nabla_{{\vec x}'} u(t,2\boldsymbol{\lambda}(t)- {\vec x}', {\vec x}'') \big),
\end{equation}
and
\begin{equation}
\label{sv2}
F(u)(t,{\vec x}', {\vec x}'') =-F(u)(t,2\dot{\boldsymbol{\lambda}}(t)- {\vec x}', {\vec x}'').
\end{equation}
Since \eqref{sv1} and \eqref{sv2} hold for all $(t,\vec x)\in I\times\R^p$, and \(u\) is a solution of \eqref{mE-P1}, we deduce that
\[
P(\nabla_{\vec x})( u_t (t,\vec x)+2\dot{\boldsymbol{\lambda}}(t) \cdot \nabla_{\vec x'} u(t,\vec{x}))  = -F(u)(t,\vec x).
\]
Subtracting this equation from \eqref{mE-P1} yields
\[
-\dot{\boldsymbol{\lambda}}(t) \cdot \big( P(\nabla_{\vec x})\nabla_{\vec x'} u(t,\vec x) \big) =F(u)(t,\vec x),
\]
valid for all $(t,\vec x)\in I\times\R^p$. From here the proof follows the lines of the earlier proofs. Fixing  a time $t_0>0$ and setting $\vec c:=\dot{\boldsymbol {\lambda}}(t_0)$, the function
\[
    \bar{u}(t,\vec x):=u(t_0,\vec x'-\vec c(t-t_0),\vec x'')
 \]
is an \(\vec x'\)-steady solution of \eqref{mE-P1}, coinciding with the \(\vec x'\)-symmetric solution $u$ at $(t_0,\vec x)$. By uniqueness of the initial-value problem, $u=\bar{u}$.
\end{proof}

The multi-dimensional generalisation of principle (P2) is now straight-forward, using the one-dimensional proof in combination with the notation from the proof of Theorem~\ref{mT-P1}.

\begin{theorem}[Principle (P2) for higher dimensions]
\label{T-P2_2D}
If, in Theorem~\ref{mT-P1}, the functions \({\boldsymbol \xi}' \mapsto \F(\hat u(t, \cdot), ({\boldsymbol \xi}',{\boldsymbol \xi}''))\)  and \({\boldsymbol \xi}' \mapsto P({\boldsymbol \xi}',{\boldsymbol \xi}'')\) are instead of the same parity, then the vectorial axis of symmetry is fixed: there exists a vector ${\boldsymbol \lambda_0} \in \R^q$ such that \({\vec x}' \mapsto u(t,\vec x', \vec x'')\) is symmetric around \({\boldsymbol \lambda}_0\) for all \(t \in I\).
\end{theorem}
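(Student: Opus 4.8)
The plan is to mimic the one-dimensional proof of Principle (P2), now in the split notation of Theorem~\ref{mT-P1}. Let $u$ be an $\vec x'$-symmetric solution in the sense of \eqref{eq:x'-symmetric}, and suppose $u_0$ is symmetric in $\vec x'$ around a fixed vector $\boldsymbol\lambda_0 \in \R^q$, i.e. $u_0(\vec x',\vec x'') = u_0(2\boldsymbol\lambda_0 - \vec x', \vec x'')$. Define the reflected function
\[
v(t,\vec x', \vec x'') := u(t, 2\boldsymbol\lambda_0 - \vec x', \vec x''),
\]
which by construction satisfies $v(0,\cdot) = u_0$. The goal is to show that $v$ solves the same equation \eqref{mE-P1} as $u$, so that uniqueness of the initial-value problem forces $v \equiv u$; this exactly says $u(t,\cdot)$ is symmetric in $\vec x'$ around $\boldsymbol\lambda_0$ for every $t \in I$, which is the assertion.

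First I would check that $P(\boldsymbol D) v_t = P(\boldsymbol D) u_t|_{(t,2\boldsymbol\lambda_0-\vec x',\vec x'')}$: since reflection $\vec x' \mapsto 2\boldsymbol\lambda_0 - \vec x'$ in an even number... more precisely, since $P(\boldsymbol\xi',\boldsymbol\xi'')$ is even in $\boldsymbol\xi'$ (the case of odd $P$ and odd $\F$ being analogous), the multiplier $P(\boldsymbol D)$ commutes with the $\vec x'$-reflection up to the translation by $2\boldsymbol\lambda_0$, exactly as in the scalar argument in the proof of Theorem~\ref{T-P2}; on the Fourier side one has $\hat v(t,\boldsymbol\xi) = e^{-\i 2\boldsymbol\lambda_0 \cdot \boldsymbol\xi'}\hat u(t,-\boldsymbol\xi',\boldsymbol\xi'')$ and evenness of $P$ in $\boldsymbol\xi'$ gives the claim directly. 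Next, and this is the substantive step, I would show $F(\boldsymbol D,v) = F(\boldsymbol D,u)|_{(t,2\boldsymbol\lambda_0-\vec x',\vec x'')}$, equivalently on the Fourier side $\F(\hat v(t,\cdot),\boldsymbol\xi) = e^{-\i 2\boldsymbol\lambda_0\cdot\boldsymbol\xi'}\F(\hat u(t,\cdot),\cdot)|_{(-\boldsymbol\xi',\boldsymbol\xi'')}$; this is the multivariate analogue of the computation in the proof of Theorem~\ref{T-P2 nonlocal}, using the multi-dimensional version of the convolution identity \eqref{indu} together with the evenness of $\boldsymbol\xi' \mapsto \F(\hat u(t,\cdot),(\boldsymbol\xi',\boldsymbol\xi''))$ and the multi-dimensional form of Lemma~\ref{main equality lemma}. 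Since $F$ and $P$ have the same parity in $\boldsymbol\xi'$, the two reflections match and $v$ satisfies $P(\boldsymbol D) v_t = F(\boldsymbol D, v)$.

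The main obstacle is purely bookkeeping: one must verify that the induction \eqref{indu} and Lemma~\ref{main equality lemma} go through with $\boldsymbol\xi$ replaced by $(\boldsymbol\xi',\boldsymbol\xi'')$ and reflection acting only on the primed block, and that no pseudo-product term mixes the $\vec x'$- and $\vec x''$-directions in a way incompatible with this split — but, as noted in the footnote to Theorem~\ref{mT-P1}, such mixing is precisely what the parity hypothesis \eqref{eq:higher-dim Fourier parity} rules out. I would therefore state the result as an immediate corollary of the machinery already developed for Theorems~\ref{T-P2 nonlocal} and~\ref{mT-P1}, carrying over the convolution identity with the obvious modification that all reflections and exponential factors involve only $\boldsymbol\xi'$, and leave the (routine) details to the reader, exactly as was done for the nonlocal Principle (P3) in Theorem~\ref{T-P3 nonlocal}.
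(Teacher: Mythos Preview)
Your proposal is correct and follows precisely the route the paper itself indicates: the paper gives no detailed proof of Theorem~\ref{T-P2_2D}, stating only that it is ``straight-forward, using the one-dimensional proof in combination with the notation from the proof of Theorem~\ref{mT-P1}''. Your sketch carries out exactly this programme---define the reflected candidate $v(t,\vec x',\vec x'')=u(t,2\boldsymbol\lambda_0-\vec x',\vec x'')$, verify on the Fourier side that $v$ solves \eqref{mE-P1} by the same convolution/parity computation as in Theorem~\ref{T-P2 nonlocal} (now with reflections and exponential factors acting only in the $\boldsymbol\xi'$-block), and conclude by uniqueness---so there is nothing to add.
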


Principle (P3) does not have such a natural generalisation to the higher-dimensional setting, but the general comment made in Remark~\ref{rem:local p3 general} still holds, and we state it here for completeness.

\begin{theorem}[Weak principle (P3) for higher dimensions]
\label{T-P3_2D}
Consider the equation
    \begin{equation}\label{eq:P3 general multidim}
        u_{t} =F_1(\boldsymbol D,u) + F_2(\boldsymbol D,u),  
    \end{equation}
where $F_1$ and  \(F_2\) are both multi-variate pseudo-products such as in Theorem~\ref{mT-P1}, with \({\boldsymbol \xi}' \mapsto \F_1(\hat u(t, \cdot), ({\boldsymbol \xi}',{\boldsymbol \xi}''))\) odd and \({\boldsymbol \xi}' \mapsto \F_2(\hat u(t, \cdot), ({\boldsymbol \xi}',{\boldsymbol \xi}''))\) even.  Then,  any \(\vec x'\)-symmetric solution of \eqref{eq:P3 general multidim} is, at each instant of time, a solution of the \(\vec x'\)-steady equation $- \dot {\boldsymbol \lambda} \cdot \nabla_{\vec x'} u =F_1({\boldsymbol D},u)$. 
\end{theorem}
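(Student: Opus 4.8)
The plan is to mimic the one-dimensional weak principle (P3) from Remark~\ref{rem:local p3 general}, but carry it out on the Fourier side in the multi-dimensional notation already set up for Theorem~\ref{mT-P1}. First I would record the consequences of the \(\vec x'\)-symmetry \eqref{eq:x'-symmetric} on the Fourier transform, namely that \(\hat u(t, \boldsymbol\xi', \boldsymbol\xi'') = e^{-\i 2 \boldsymbol\lambda(t) \cdot \boldsymbol\xi'} \hat u(t, -\boldsymbol\xi', \boldsymbol\xi'')\), differentiate in \(t\) to obtain the analogue of \eqref{u_t symm}, and then invoke the multi-dimensional version of Lemma~\ref{main equality lemma} for each of \(F_1\) and \(F_2\). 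Since \(\boldsymbol\xi' \mapsto \F_1(\hat u(t,\cdot), (\boldsymbol\xi', \boldsymbol\xi''))\) is odd it picks up a negative sign, whereas \(\boldsymbol\xi' \mapsto \F_2(\hat u(t,\cdot), (\boldsymbol\xi', \boldsymbol\xi''))\) is even and picks up a positive sign; this is exactly the asymmetry that makes the two terms behave differently.

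The core computation then proceeds as in the proof of Theorem~\ref{P1 non-local}: substituting these identities into the Fourier-transformed equation \eqref{eq:P3 general multidim} evaluated at \((t, \boldsymbol\xi', \boldsymbol\xi'')\), one finds that \(\hat u\) also satisfies
\[
\p_t \hat u - \i 2 \dot{\boldsymbol\lambda}(t) \cdot \boldsymbol\xi'\, \hat u = -\F_1(\hat u(t,\cdot), \boldsymbol\xi) + \F_2(\hat u(t,\cdot), \boldsymbol\xi)
\]
(with the symmetry relation folded back in), and subtracting the original equation \(\p_t \hat u = \F_1 + \F_2\) kills the \(\F_2\)-terms and the \(\p_t \hat u\)-terms, leaving
\[
-\i 2 \dot{\boldsymbol\lambda}(t) \cdot \boldsymbol\xi'\, \hat u(t,\boldsymbol\xi) = 2 \F_1(\hat u(t,\cdot), \boldsymbol\xi),
\]
which, after dividing by \(2\) and recalling \(\widehat{\nabla_{\vec x'}} = \i \boldsymbol\xi'\), is precisely the Fourier form of \(-\dot{\boldsymbol\lambda}(t) \cdot \nabla_{\vec x'} u = F_1(\boldsymbol D, u)\). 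Inverting the Fourier transform gives the claim: at each fixed \(t\), the \(\vec x'\)-symmetric solution solves this \(\vec x'\)-steady equation.

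The only genuinely new ingredient beyond what is already proved is the multi-dimensional analogue of Lemma~\ref{main equality lemma}, i.e.\ that for a pseudo-product \(F\) of the stated form with \(\boldsymbol\xi' \mapsto \F(\hat u(t,\cdot), \boldsymbol\xi)\) of a definite parity, one has \(\F(\hat u(t,\cdot), \boldsymbol\xi) = \pm e^{-\i 2 \boldsymbol\lambda(t) \cdot \boldsymbol\xi'} \F(\hat u(t,\cdot), \cdot)|_{(-\boldsymbol\xi', \boldsymbol\xi'')}\) for an \(\vec x'\)-symmetric \(u\). This is the step I expect to be the main (though still routine) obstacle: one repeats the induction in the proof of Lemma~\ref{main equality lemma}, but now the convolution variables are vectors split as \((y', y'')\), the substitution \(y' = -z'\) is applied only in the primed block, and the \(\vec x''\)-block of each multiplier symbol and of \(\hat u\) simply rides along untouched — this is exactly the separation guaranteed by the parity assumptions (and warned about in the footnote). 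Given this lemma, the rest is a line-by-line transcription of the one-dimensional argument, so I would state the lemma, indicate that its proof copies that of Lemma~\ref{main equality lemma} mutatis mutandis, and then present the short subtraction argument above; the reduction to the local case \eqref{mE-F x} is immediate since a local odd/even \(F(\partial_{\vec x}, u)\) has a Fourier symbol with the corresponding parity in \(\boldsymbol\xi'\).
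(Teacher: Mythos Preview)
Your approach is correct and matches the paper's: the paper gives no explicit proof of this theorem, merely pointing to Remark~\ref{rem:local p3 general} and leaving the multi-dimensional, nonlocal transcription to the reader, which is precisely the Fourier-side subtraction argument you outline (and the multi-dimensional analogue of Lemma~\ref{main equality lemma} is indeed the only new ingredient, handled as you describe). One small sign slip to watch: differentiating \(\hat u(t,\boldsymbol\xi) = e^{-\i 2\boldsymbol\lambda(t)\cdot\boldsymbol\xi'}\hat u(t,-\boldsymbol\xi',\boldsymbol\xi'')\) in \(t\) and folding the symmetry back gives \(\partial_t\hat u + \i 2\dot{\boldsymbol\lambda}\cdot\boldsymbol\xi'\,\hat u = -\F_1 + \F_2\) (plus, not minus, on the left), though your final displayed identity and conclusion come out correct regardless.
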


\section{Vector-valued equations}
\label{S-systems}

The above principles may be extended to systems of equations as well, simply by considering each component (where each component \(\vec u_i\) of the solution should be symmetric with respect to the same axes of symmetry \(\boldsymbol \lambda\)). Instead of stating the vector-valued equivalents of the principles from last section, which are just the same with \(\vec u\), \(\vec P\) and \(\vec F\) now boldfaced, we give a couple of examples to show the applicability of the respective principles. Notice that the parities only need to match in each component: in principle (P1), $\vec P$ does not need a parity; only the parity of \(\vec P_i\) and $\vec F_i$ need to match.

\begin{Example}[Hirota-Satsuma equation, (P1)]
A simple example to which the vector-valued principle (P1) applies is the Hirota--Satsuma equation \cite{GS07},
given by
\begin{align*}
    &u_t = \frac{1}{2} u_{xxx} + 3 u u_x - 6ww_x,\\
    &w_t = -w_{xxx}-3uw_x.
\end{align*}
Here, ${\vec P} = (1,1)$ is even, while ${\vec F}$ is odd in \(\partial_x\), and we conclude that any spatially symmetric and unique classical solution $(u,w)$ is steady.
\end{Example}

\begin{Example}[Bidirectional Whitham equation, (P1)]
The bidirectional Whitham equation \cite{MR3390078} is the system 
\begin{align*}
\begin{split}
    \eta_t &= -K*u_x -(\eta u)_x,\\
    u_t& = -\eta_x-uu_x,
  \end{split}
\end{align*}
with $K(x):=\mathcal{F}^{-1}\left(\frac{\tanh(\xi)}{\xi}\right)$. Equivalently,
\begin{align*}
	\vec{v}_t(t,x)=\vec{F}(D, \vec{v})(t,x),
\end{align*}
where $\vec{v}=(\eta, u)$ and $\vec{F}(D, \vec{v})=(-K*u_x -(\eta u)_x, -\eta_x-uu_x)$. With $\F(\hat{\vec v}(t,\xi), \xi) = \mathcal{F}(D,F(\vec v))(t,\xi)$ one has that
\begin{align*}
    \F(\hat{\vec v}(t,\xi), \xi)= \left(-\frac{\tanh(\xi)}{\xi} (\i\xi)\hat{u}(t,\xi) -(\i\xi)[\hat{\eta}*\hat{u}](t,\xi), -(\i\xi)\hat{\eta}-[\hat{u}*(\i\cdot)\hat{u}](t,\xi)\right),
\end{align*}
and therefore
\begin{align*} 
    \F(\hat{\vec v}(t,\xi), -\xi) =-\F(\hat{\vec v}(t,\xi), \xi),
\end{align*}
since $\tanh$ is odd. Hence, any  spatially symmetric and unique classical solution $(\eta, u)$ of the bidirectional Whitham equation is a travelling wave.
\end{Example}

\begin{Example}[Nonlocal fractional Keller--Segel system, (P2)]
The nonlocal fractional Keller--Segel system of chemotaxis in $I \times \R^{n}$, $n\geq 2$, describes directed movement of cells in response to the gradient of a chemical \cite{Z10}. In two space dimensions, it is given by
\begin{equation}\label{KS}
\begin{aligned}
&\partial_t u+ (-\Delta)^{\frac{\theta}{2}}u=-\nabla \cdot (uB(v)),\\
&\partial_t v+(-\Delta)^{\frac{\theta}{2}}v=u, 
\end{aligned}
\end{equation}
where  $\theta \in (1,2]$, $B(u)=\nabla\left((-\Delta)^{\frac{-\theta_1}{2}}\right)$ and $\theta_1 \in [0,2)$. Since the symbol of the fractional Laplacian $(-\Delta)^{\frac{s}{2}}$ is given by $|\xi|^s$, it is clear that writing \eqref{KS} as
\[
	\partial_t \vec{w} =\vec{F}(\vec w),
\]
with $\vec w = (u,v)$, both components of the function 
\[
\vec{F}(\vec w)= (-(-\Delta)^{\frac{\theta}{2}}u-\nabla \cdot (uB(v)),-(-\Delta)^{\frac{\theta}{2}}v-u)
\]
are even in the sense of \eqref{eq:higher-dim Fourier parity}. Hence, if $\vec w$ is a classical and unique solution for which the initial data are spatially symmetric with respect to an axis of symmetry \(\boldsymbol \lambda \in \R^2\), then the solution is symmetric for all (forward) times with respect to the same, fixed \(\boldsymbol \lambda\).
\end{Example}

\subsection{Euler equations}
\label{s-euler}
Our last example is the incompressible Euler equations posed in a two-dimensional domain contained in physical vacuum \cite{MR2291920},
\begin{align}
\label{e-wwp}
\begin{split}
    &u_t + uu_x+vu_y =  -P_x,\\ 
    &v_t + uv_x+vv_y =  -P_y,\\
    &u_x+v_y=0,\\
    &v=\eta_t+\eta_x u, \; \;P=0 \quad \text{on}\quad y=\eta(t,x),\\
    &v=0 \quad \text{on}\quad  y=-d. 
    \end{split}
\end{align}
Here, $(u,v)$ is the velocity field, not necessarily irrotational, $P$ is the pressure, and $\eta$ is the graph of the free surface. We denote the fluid domain by $\Omega(t)=\{(x,y) \mid  x\in \R, -d \leq y \leq \eta(t,x) \}$, and call a solution $(u,v,\eta,P)$ horizontally symmetric if its stream function (defined by \(\nabla \psi = (-v,u)\)) is symmetric in \(x\). Written out in \((u,v,\eta)\), this means that
\begin{align}
\label{def_symmetry}
\begin{split}
    u(t,x,y)&=u(t,2\la(t)-x,y),\\
    v(t,x,y)&=-v(t,2\la(t)-x,y),\\
    \eta(t,x)&=\eta(t,2\la(t)-x),
   \end{split}
\end{align}
for some function $\la \in C^1(\R)$, and all $t>0$. Let \(n\) denote the outward unit normal at the free surface. We shall assume that the initial datum satisfies the Rayleigh--Taylor condition
  \begin{equation}
  \label{e-RTcond}
      \nabla P \cdot n <0\qquad \text{ on }\quad y=\eta(t_0,x),
   \end{equation}
 and that the solution is classical in the sense that  $u,v, P \in C^1(I,C^2(\R))$ and $\eta \in C^1(I\times \R)$. To ensure uniqueness, we shall in our example use the following well-posedness result.

\begin{lemma}\cite[Thm 1.2]{MR2291920}
\label{lemma-WPgravity}
Assume that $u_0, v_0, \eta_0 \in H^3(\Omega(t_0))$ and that the condition
  \begin{equation}
  \label{e-RTcond}
      \nabla P \cdot n <0 \text{ on } y=\eta(t_0,x)
   \end{equation}
holds initially. Then, there exists $T> 0$ and a unique solution $(u(t), v(t), P(t),\eta(t))$ of  \eqref{e-wwp} with $u,v \in L^{\infty}(0,T;H^3(\Omega))$, $P\in  L^{\infty}(0,T;H^{\frac{7}{2}}(\Omega))$ and $\eta \in H^3(\R)$. 
\end{lemma}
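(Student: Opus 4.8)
The plan is to regard this as the classical local well-posedness theorem for the free-surface incompressible Euler equations under the Rayleigh--Taylor sign condition: the statement is quoted verbatim from \cite[Thm 1.2]{MR2291920}, and in the present paper it is used only as a black box --- to secure a \emph{unique} classical solution for given symmetric initial data, so that the uniqueness mechanism behind Principle~(P1) can be deployed. For the reader's benefit I indicate how one would organise a proof.

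First I would fix the moving domain by passing to Lagrangian coordinates: let $\Phi(t,\cdot)\colon\Omega(t_0)\to\Omega(t)$ be the particle flow of $(u,v)$, which is volume preserving because $u_x+v_y=0$, and recast \eqref{e-wwp} as a quasilinear system on the fixed reference domain for $(\Phi,\partial_t\Phi)$, with the pressure recovered from the elliptic problem $\Delta P=2(u_xv_y-u_yv_x)$ in $\Omega(t)$, $P=0$ on $y=\eta$, and $\nabla P\cdot n=0$ on $y=-d$ (the latter because $v$ vanishes identically on the rigid flat bottom). The structural point is that in the natural energy identity every boundary contribution is manifestly controlled except one on the free surface, proportional to $-\nabla P\cdot n$ times a positive quantity; the Rayleigh--Taylor condition \eqref{e-RTcond} is precisely what makes that term have a favourable sign, so that an $H^3$-energy estimate for $(u,v)$ closes, with $P\in H^{7/2}$ then following by elliptic regularity (the extra half-derivative coming from the rigid bottom together with the compatibility of the data). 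Without \eqref{e-RTcond} the linearised operator is not hyperbolic and no such estimate exists --- this is the crux.

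Next I would build approximate solutions by a suitable regularisation --- horizontal mollification of the free boundary as in \cite{MR2291920}, or an artificial-viscosity/penalisation scheme --- for which well-posedness is elementary, derive a priori bounds \emph{uniform} in the regularisation parameter from the energy method above combined with the div--curl estimate for $(u,v)$ (bounding the $H^3$ norm by the $L^2$ norm plus $H^2$ norms of the scalar vorticity $\omega=v_x-u_y$ and of $u_x+v_y$, plus an $H^{5/2}$ norm of the normal trace) and the transport equation $\partial_t\omega+u\omega_x+v\omega_y=0$, which keeps $\omega$ as regular as it was initially; one then passes to the limit. The existence time $T>0$ depends only on the $H^3$ sizes of $u_0,v_0,\eta_0$ and on how negative $\nabla P\cdot n$ is at $t_0$, a property that persists for a short time by continuity. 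Uniqueness follows by writing the system satisfied by the difference of two solutions with identical data and running the same estimate one derivative lower, which yields a Gr\"onwall inequality with vanishing initial datum.

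The main obstacle, as signalled, is the free boundary: the domain $\Omega(t)$ is itself an unknown, the Lagrangian change of variables trades Eulerian regularity for that of $\Phi$, and the energy estimate only closes because of the Rayleigh--Taylor sign. Everything else --- elliptic regularity for $P$, the div--curl lemma, Gr\"onwall, the limiting argument --- is standard, and in practice I would simply cite \cite[Thm 1.2]{MR2291920} and record that its hypotheses ($H^3$ data and \eqref{e-RTcond} at $t_0$) are exactly what the symmetry application that follows requires.
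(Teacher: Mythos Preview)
Your reading is correct: the paper does not prove this lemma at all. It is stated with the attribution \cite[Thm~1.2]{MR2291920} and used purely as a black box to supply the uniqueness needed in the proof of Proposition~\ref{prop-EulerSymSteady}. Your sketch of the Lagrangian-coordinate energy method, the role of the Rayleigh--Taylor sign in closing the boundary term, and the div--curl/vorticity-transport structure is a faithful summary of the approach in the cited reference, and your final remark---that in practice one simply cites the result---matches exactly what the paper does.
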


For irrotational waves in the setting of infinite depth \cite{Wu97}, or in the setting of a flat bottom \cite{Lannes2005}, the condition \eqref{e-RTcond} is automatically satisfied. Our result is:

\begin{proposition}
\label{prop-EulerSymSteady}
Any horizontally symmetric solution of \eqref{e-wwp} whose initial datum satisfies the assumptions of Lemma~\ref{lemma-WPgravity} constitutes a travelling solution. 
\end{proposition}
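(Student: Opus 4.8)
The plan is to mimic the philosophy of Principle (P1): use the horizontal symmetry assumption to derive a second evolution equation that the solution satisfies, subtract it from the original system, and extract a pointwise identity forcing the solution to be a travelling wave; then invoke the uniqueness result of Lemma~\ref{lemma-WPgravity} to upgrade this from an instantaneous statement to a genuine travelling solution. First I would record the consequences of \eqref{def_symmetry} for the derivatives: at the point \((t,2\lambda(t)-x,y)\), the \(x\)-derivatives of \(u\) and \(\eta\) pick up a sign, the \(x\)-derivative of \(v\) does not, the \(y\)-derivatives behave as usual, and \(\partial_t\) acting on the symmetric quantities produces the extra transport term \(2\dot\lambda\,\partial_x\) as in the earlier proofs. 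A short check shows that the symmetry of the stream function is consistent with the divergence-free condition, the bottom boundary condition, and \(P(t,x,y)=P(t,2\lambda(t)-x,y)\); these last facts need to be verified because \(P\) is determined by \((u,v)\) through the elliptic problem \(-\Delta P = 2(u_x v_y - u_y v_x)\) with the stated boundary data, and that data respects the reflection \(x\mapsto 2\lambda(t)-x\).

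Next I would substitute the reflected solution \((u(t,2\lambda-x,y),\,-v(t,2\lambda-x,y),\,\eta(t,2\lambda-x),\,P(t,2\lambda-x,y))\) into the momentum equations. Because the nonlinear convective terms \(uu_x+vu_y\) and \(uv_x+vv_y\) are \emph{even} under this reflection (each product has either zero or two factors that flip sign), while \(-P_x\) flips sign and \(-P_y\) does not, one finds — exactly as in Remark~\ref{rem:local p3 general} and the proof of Theorem~\ref{T-P1} — that the symmetric solution also satisfies the momentum equations with \(\partial_t\) replaced by \(\partial_t + 2\dot\lambda\,\partial_x\). Subtracting from \eqref{e-wwp} evaluated at \((t,x,y)\) yields the identities \(\dot\lambda(t)u_x = 0\) and \(\dot\lambda(t)v_x=0\) in \(\Omega(t)\), together with \(\dot\lambda(t)\eta_x=0\) from the kinematic condition. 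If \(\dot\lambda(t_0)=0\) for the fixed initial time then one argues the axis is stationary; otherwise \(u,v,\eta\) are \(x\)-independent at that time. In either case, fixing \(t_0\) and setting \(c:=\dot\lambda(t_0)\), the translated profile \(\bar u(t,x,y):=u(t_0,x-c(t-t_0),y)\), and similarly \(\bar v,\bar\eta,\bar P\), is a travelling solution of \eqref{e-wwp}: one checks directly that the Euler system, incompressibility, and the free-surface and bottom conditions are all preserved under the Galilean-type shift, using that the pressure of the shifted state is the shift of the pressure. Since \((\bar u,\bar v,\bar\eta,\bar P)\) agrees with \((u,v,\eta,P)\) at \(t=t_0\) and the shifted initial datum still lies in \(H^3\) and still satisfies the Rayleigh--Taylor condition \eqref{e-RTcond} (both being reflection- and translation-invariant properties of the data at \(t_0\)), Lemma~\ref{lemma-WPgravity} forces \((u,v,\eta,P)=(\bar u,\bar v,\bar\eta,\bar P)\) on the common interval of existence, i.e. the solution is travelling with speed \(c=\dot\lambda(t_0)\).

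The step I expect to be the main obstacle is the bookkeeping around the free boundary and the pressure: unlike the fixed-domain equations treated earlier, here the fluid domain \(\Omega(t)\) itself moves and is only symmetric because \(\eta\) is, and the pressure is a \emph{nonlocal} function of the velocity field through an elliptic problem posed on that moving domain. One must therefore argue carefully that horizontal symmetry of the stream function genuinely propagates to symmetry of \(P\) — this is where the precise form of the Rayleigh--Taylor/well-posedness framework of \cite{MR2291920} enters — and that the dynamic condition \(P=0\) and kinematic condition \(v=\eta_t+\eta_x u\) on \(y=\eta\) transform correctly under both the reflection and the later Galilean shift. A secondary technical point is to confirm that the translated datum still satisfies the hypotheses of Lemma~\ref{lemma-WPgravity}, so that uniqueness is applicable; this is immediate since translation in \(x\) preserves Sobolev norms and the sign of \(\nabla P\cdot n\) on the surface. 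Once these structural facts are in place, the argument is the same Galilean-subtraction-plus-uniqueness scheme used throughout the paper.
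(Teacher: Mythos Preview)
Your parity count for the first convective term is off: in \(uu_x\) the factor \(u\) is even under \(x\mapsto 2\lambda(t)-x\) while \(u_x\) is odd, so exactly one factor flips; likewise in \(vu_y\) the factor \(v\) is odd and \(u_y\) is even. Hence \(uu_x+vu_y\) is \emph{odd}, not even. Moreover, since \(v\) itself is odd, the time derivative \(v_t\) picks up an overall sign under the reflection that your scheme does not account for. As a result the subtraction step cannot produce \(\dot\lambda\,u_x=0\), \(\dot\lambda\,v_x=0\), \(\dot\lambda\,\eta_x=0\): any genuine travelling wave with \(\dot\lambda\equiv c\neq 0\) and nontrivial \(x\)-dependence would violate those identities, so this is a quick sanity check that the conclusion is wrong. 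If one grants that \(P\) is even (your elliptic argument for this is plausible but needs the correct Neumann data at the flat bottom), then redoing the bookkeeping correctly in each momentum equation yields instead the transport relations \(u_t+\dot\lambda u_x=0\), \(v_t+\dot\lambda v_x=0\), \(\eta_t+\dot\lambda\eta_x=0\), from which your Galilean-shift-plus-uniqueness scheme would close.

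The paper takes a different route precisely to avoid arguing that \(P\) inherits the symmetry (see the remark following the proposition). It applies \(\partial_y\) to the first momentum equation and \(\partial_x\) to the second and subtracts, eliminating \(P\) altogether; the symmetry manipulation is then performed on this curl equation and on the kinematic condition, yielding at \(t_0\) the closed relation \(\big((\bar u-c)\bar u_x+\bar v\bar u_y\big)_y-\big((\bar u-c)\bar v_x+\bar v\bar v_y\big)_x=0\) together with \(\bar v=(\bar u-c)\bar\eta_x\) on the surface. This lets one \emph{define} a potential \(F\) that serves as the pressure for the shifted candidate \((\tilde u,\tilde v,\tilde\eta)\); the remaining work, which your outline underestimates, is to verify from the original boundary data for \(P\) both the dynamic condition \(\tilde F=\text{const}\) on \(y=\tilde\eta\) and the Rayleigh--Taylor sign \(\nabla\tilde F\cdot n<0\). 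So the paper trades the \(P\)-symmetry argument for a reconstruction of the pressure; your approach, once the parity is corrected, trades it back. Both strategies are viable, but as written your subtraction step is incorrect and the argument does not go through.
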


\begin{remark}
The first corresponding result for the Euler equations can be found in~\cite{Ehrnstrom2009a}, assuming symmetry also for \(P\) and in the presence of gravity.
In the presence of constant vorticity, it was shown in \cite{K16} that symmetry of the wave profile and the horizontal velocity component on the surface is enough to guarantee that spatially periodic solutions are travelling waves in that case. In the irrotational setting, a corresponding assumption at the flat bed is enough to guarantee that periodic symmetric waves are travelling \cite{K152}. Our result is different in that it does not assume anything on the curl of the flow, but requires instead the setting of \eqref{def_symmetry} and Lemma~\ref{lemma-WPgravity}.
\end{remark}

\begin{remark}
Note that standing waves, which exist for the Euler equations \cite{MR2187619}, do not violate the above results, since such waves do not fulfil the corresponding assumptions.
\end{remark}

\begin{proof}[Proof of Proposition~\ref{prop-EulerSymSteady}]
 Let  $(u,v,\eta,P)$ be a horizontally symmetric solution of \eqref{e-wwp} whose initial datum is such that the solution is unique. Taking the curl of the first and second equation in \eqref{e-wwp} yields
\begin{equation}
\label{e-curl}
    \Big(u_{ty} +(uu_x)_y+(vu_y)_y\Big)-  \Big(v_{tx} +(uv_x)_x+(vv_y)_x\Big)=0,
\end{equation}
in view of that $-P_{xy}+P_{yx}=0$.  Evaluating \eqref{e-curl} at $(t,2\la(t)-x,y)$ and taking the symmetry assumption \eqref{def_symmetry}  into account, we return to original variables $(t,x,y)$ and obtain 
\begin{equation*}
    \Big(u_{ty} +2\dot \la u_{xy} -(uu_x)_y-(vu_y)_y\Big)
    - \Big(v_{tx} +2\dot\la v_{xx}-(uv_x)_x-(vv_y)_x\Big)=0.
\end{equation*}
Subtracting this from \eqref{e-curl} yields
\begin{equation}
\label{e-curldiff}
    \Big((u-\dot \la) u_{x}  + vu_y)\Big)_y
    - \Big((u-\dot\la) v_{x}+vv_y\Big)_x=0,
\end{equation}
and the same procedure for the kinematic boundary condition at the free surface gives
\begin{equation}
\label{e-BC1diff}
    v=(u-\dot \la)\eta_x\quad \text{ on } y=\eta(t,x).
\end{equation}
Now fix a time $t_0$, let $c:=\dot\la(t_0)$, and define
\begin{align*}
    \ub(x,y) &= u(t_0,x,y),\\
    \vb(x,y) &= v(t_0,x,y),\\
    \eb(x) &= \eta(t_0,x).
\end{align*}
By construction, $(\ub,\vb,\eb)$ satisfies \eqref{e-curldiff} and \eqref{e-BC1diff}, that is,
\begin{align}
\label{e-curlfree}
  \Big((\ub-c) \ub_{x}  + \vb\ub_y)\Big)_y
    - \Big((\ub-c) \vb_{x}+\vb\vb_y\Big)_x=0,
\end{align}
and
\begin{align*}
   \vb = (\ub-c)\eb_x \quad \text{on } y=\eb(x).  
 \end{align*}
Due to \eqref{e-curlfree}, there exists a function $F=F(x,y)$ such that
 \begin{align*}
    &(\ub-c) \ub_{x}  + \vb\ub_y= -F_x.\\
    &(\ub-c) \vb_{x}+\vb\vb_y= -F_y,
 \end{align*}
 where $F$ is defined uniquely up to an additive constant. Next we define 
\begin{align*}
    \ut(t,x,y) &= \ub(x-c(t-t_0),y),\\
    \vt(t,x,y) &= \vb(x-c(t-t_0),y),\\
    \et(t,x) &= \eb(x-c(t-t_0)),\\
    \tilde F(t,x,y) &= F(x-c(t-t_0),y).
\end{align*}
 These functions satisfy 
 \begin{align*}
    &\ut_t +\ut\ut_x + \vt\ut_y= -\tilde F_x,\\
    &\vt_t+\ut\vt_x + \vt\vt_y= -\tilde F_y,\\
    &\ut_x+\vt_y = 0,\\
    & \vt=\et_t+\ut\et_x, \quad \text{on}\quad y=\et(t,x),\\
    & \vt = 0 \quad \text{on } y=-d.
 \end{align*} 
 In order to verify that $\tilde F=0$ on $y=\et(t,x)$, we compute
 \begin{align*}
 -\nabla \tilde F (t,x, \tilde \eta(t,x))\cdot (1, \tilde \eta_x(t,x))&= \left( \tilde u_t +\tilde u \tilde u_x + \tilde v \tilde u_y\right)(t,x, \tilde \eta(t,x))\\
 &\quad + \tilde \eta_x(t,x) \left(\tilde v_t + \tilde u \tilde v_x + \tilde v \tilde v_y \right)(t,x, \tilde \eta(t,x))\\
 &=\left( -c \bar u_x +\bar u \bar u_x + \bar v \bar u_y\right)(x, \bar \eta(x-c(t-t_0)))\\
 &\quad + \bar \eta_x(t,x) \left(-c\bar v_x + \bar u \bar v_x + \bar v \bar v_y \right)(x, \bar \eta(x-c(t-t_0))).
 \end{align*}
 On the other hand, $\nabla P(t,x, \eta(t,x))\cdot (1, \eta_x(t,x))=0$, meaning
 \begin{align}\label{e-1}
u_t + uu_x +vu_y + \eta_x \left( v_t + uv_x+vv_y \right)=0
 \end{align}
 and
 \begin{align}\label{e-2}
 u_t + 2 \dot{\lambda}u_x - uu_x -vu_y + \eta_x \left( v_t+2\dot{\lambda}v_x - uv_x-vv_y \right)=0,
 \end{align}
 in view of the symmetry of $u, \eta$ and the antisymmetry of $v$.
 Subtracting \eqref{e-1} from \eqref{e-2} now yields that
 \begin{align*}
 -\dot{\lambda}u_x + uu_x+vu_y + \eta_x\left(-\dot{\lambda}v_x+uv_x+vv_y\right)=0.
 \end{align*}
From this we deduce that, at $t=t_0$, we have
 \begin{align*}
	  \nabla \tilde F (t_0,x, \tilde \eta(t_0,x))\cdot (1, \tilde \eta_x(t_0,x))=0.
 \end{align*}
Now, since 
\[
\nabla \tilde F (t,x, \tilde \eta(t,x))\cdot (1, \tilde \eta_x(t,x))=\nabla F(x-c(t-t_0), \bar \eta (x-c(t-t_0)))\cdot (1, \bar \eta_x(x-c(t-t_0))), 
\]
we obtain in fact that $\tilde{F}=0$ all along $y=\tilde \eta(t,x)$. To summarize, we have shown that $(\tilde u, \tilde v, \tilde \eta, \tilde F)$ is a solution to \eqref{e-wwp}. 

To deduce uniqueness from Lemma~\ref{lemma-WPgravity}, we are left to show that $\tilde F$ satisfies the Rayleigh-Taylor condition at $t=t_0$. As above, we take advantage of the information about $\nabla P \cdot n$. Since $P$ satisfies the Rayleigh-Taylor condition, we have $-\nabla P (t_0,x,\eta(t_0,x))\cdot(-\eta_x(t_0,x),1)>0$.  In particular, the symmetry relations imply that
 \begin{align} \label{e-pressure1}
 \left(-u_t-uu_x-vu_y\right)\eta_x+v_t+uv_x+vv_y > 0, 
 \end{align}
 and
 \begin{align} \label{e-pressure2}
  \left(u_t+2\dot{\lambda}u_x-uu_x-vu_y\right)\eta_x-v_t-2\dot{\lambda}v_x+uv_x+vv_y > 0.
  \end{align}
 Adding \eqref{e-pressure1} to \eqref{e-pressure2}, we get
 \begin{align}\label{e-pressure}
 \left(\dot{\lambda}u_x-uu_x-vu_y\right)\eta_x - \dot{\lambda}v_x+uv_x+vv_y>0.
 \end{align}
Finally, computing the derivative of $-\tilde F$ in the outward normal direction, we arrive at
 \begin{align*}
 -\nabla \tilde F (x,\tilde \eta)(-\eta_x,1)=\left(c\tilde u_x-\tilde u \tilde u_x -\tilde v \tilde u_y\right)\tilde \eta_x -c\tilde v_x + \tilde u \tilde v_x +\tilde v\tilde v_y,
 \end{align*}
 which coincides with \eqref{e-pressure} at $t=t_0$ and, therefore,
 \[
 \nabla \tilde F\cdot n < 0 \qquad \mbox{on}\quad y=\tilde \eta(t_0,x).
 \]
 In view of that $(\tilde u, \tilde v, \tilde \eta)= (u,v, \eta)$ at initial time $t=t_0$, Lemma~\ref{lemma-WPgravity} allows us to conclude that $(u,v,\eta,P)$ is a travelling wave.
 \end{proof}

\bibliographystyle{siam}
\bibliography{BGEP-symmetric,Library_all}   

\end{document}